\newcommand{\mbbR}{\mathbb{R}}
\newcommand{\mbbC}{\mathbb{C}}
\newcommand{\mbbN}{\mathbb{N}}
\newcommand{\mbbZ}{\mathbb{Z}}
\newcommand{\mbbH}{\mathbb{H}}
\newcommand{\mbbF}{\mathbb{F}}
\newcommand{\eps}{\varepsilon}
\newcommand{\del}{\delta}
\newcommand{\suml}{\sum \limits}
\newcommand{\supl}{\sup \limits}
\newcommand{\abs}[1]{{\left| #1 \right|}}
\newcommand{\norm}[1]{{\| #1 \|}}
\newcommand{\wpart}[1]{{\left[ #1 \right]}}
\DeclareMathOperator{\diam}{diam}
\renewcommand{\le}{\leqslant}
\renewcommand{\ge}{\geqslant}
\theoremstyle{plain}
\newtheorem{thm}{Theorem}%Theorem} %[section], чтобы нумеровать сначала в каждом разделе
\newtheorem*{thm*}{Theorem}%Theorem}
\newtheorem{claim}[thm]{Claim}
\newtheorem*{lm*}{Lemma}%Lemma}
\newtheorem{lm}[thm]{Lemma}%Lemma}
\newtheorem{cor}[thm]{Corollary}%Corollary}
\newtheorem{prop}[thm]{Proposition}
\newtheorem*{claim*}{Claim}
\theoremstyle{definition}
\theoremstyle{remark}
\newtheorem*{rem}{Remark}%Remark}
\newcommand{\lr}[1]{{\left( #1\right)}}
\DeclareMathOperator{\Aut}{Aut}
\newcommand{\Adm}{\mathcal{A}dm}
\newcommand{\acts}[1]{\stackrel{{{{#1}}}}{{\curvearrowright}}}
\begin{document}
%\selectlanguage{English}

\title[Generic actions of almost complete growth]{{Non--existence of a universal zero entropy system via generic actions of almost complete growth}}
\author{Georgii Veprev}
\date{\today}
\thanks{The work is supported by Ministry of Science and Higher Education of the Russian Federation, agreement №  075-15-2022-287. The author acknowledges support of the Institut Henri Poincaré (UAR 839 CNRS-Sorbonne Université), and LabEx CARMIN (ANR-10-LABX-59-01).
}
\address{Leonhard Euler International Mathematical Institute in St. Petersburg, 
\newline 14th Line 29B, Vasilyevsky Island, St. Petersburg, 199178, Russia}
\email{egor.veprev@mail.ru}
\keywords{Generic action, universal system, scaling entropy, amenable group action, zero entropy, scaling entropy growth gap.}

\maketitle

\begin{abstract}
    We prove that a generic p.m.p. action of a countable amenable group~$G$ has scaling entropy that can not be dominated by a given rate of growth. {As a corollary, we obtain that there does not exist a topological action of~$G$ for which the set of {ergodic} invariant measures coincides with the set of all {ergodic} p.m.p. $G$--systems of entropy zero.} 
    
    We also prove that a generic action of a residually finite amenable group has scaling entropy that can not be bounded from below by a given sequence. We also show an example of an amenable group that has such lower bound for every free p.m.p. action.
\end{abstract}

%\tableofcontents

\section{Introduction}%Introduction}

In this paper, we study generic p.m.p. actions of amenable groups. The main object we focus on is \emph{the scaling entropy} of an action --- the invariant of slow entropy type proposed by A.~Vershik in~\cite{V2010, V1,V3}. This invariant is based on the dynamics of \emph{measurable metrics} on the underlying measure space and reflects the asymptotic behavior of the minimal epsilon-net of the averaged metric. The scaling entropy invariant was studied in \cite{VPZ, PZ, Vep3, V3, Z1, Z2}. We will give all the necessary definitions in Section~\ref{sec_scaling}. 

It turns out that some properties of the scaling entropy of a generic action can be established. In particular, we show that its asymptotic behavior can not be bounded from above by any nontrivial bound. For the case of a singe transformation, similar results were obtained in~\cite{A, Vep3}. Together with the results from~\cite{Vep2}, this gives the negative answer to the Weiss' question about the existence of a universal zero entropy system (see \cite{S, Vep2}) for all amenable groups.

Also, we study lower bounds for the generic growth rate of scaling entropy. In the case of a residually finite group, the similar result holds true: there exists no non-constant lower bound for the scaling entropy of a generic action. 
However, it is not true in general. It turns out that there exist discrete amenable groups that have \emph{a scaling entropy growth gap} meaning that the scaling entropy of any free p.m.p. action of such a group has to grow faster than some fixed unbounded function. We show an example of such a group in Section~\ref{sec_egg}. 
Our example bases on the theory of growth in finite groups, in particular the growth theorem by H.~Helfgott~(see~\cite{H}) and its generalizations~from~\cite{PS}.

\subsection*{Acknowledgements} 
The author is sincerely grateful to his advisor Pavel Zatitskiy for his support and attention during this work and to   Andrei Alpeev, Ivan Mitrofanov, Valery Ryzhikov, and Markus Steenbock for fruitful discussions and communications. 
%\subsection{Classical notions}
\subsection{Generic properties of group actions}
Descriptive set theory applied to group actions is a well studied concept in ergodic theory. We will give several definitions in order to set up notations. For more details follow survey~\cite{Ke} by A.~Kechris. Let~$\Gamma$ be a discrete countable group and $(X,\mu)$ a Lebesgue space. Let~$\Aut (X,\mu)$ be the group of all invertible measure--preserving transformations of $(X,\mu)$ endowed with the weak topology~$w$ with respect to which~$\Aut (X,\mu)$ is a Polish space. The set of all p.m.p. actions of~$\Gamma$ on~$(X,\mu)$ can be naturally identified  with the space~$A(\Gamma, X, \mu)$ of all homomorphisms from~$\Gamma$ to~$\Aut (X,\mu)$. Clearly, $A(\Gamma, X, \mu)$ is a closed subset of the space $\Aut (X,\mu)^\Gamma$ endowed with the product topology and, therefore, is Polish. Let us note that this topology is generated by the family~$\{U_{\gamma, a, \eps}(\alpha)\}_{\gamma \in \Gamma, a \subset X, \eps > 0}$  of open neighbourhoods as~prebase, where~$\alpha$ is a p.m.p. action of~$\Gamma$. Each  $U_{\gamma, a, \eps}(\alpha)$ consists of those $\beta \in A(\Gamma, X, \mu)$ that satisfy $\mu(\beta(\gamma) a \triangle \alpha(\gamma) a) < \eps$.

Every automorphism~$T \in \Aut (X,\mu)$ acts on~$A(\Gamma, X, \mu)$ by conjugation: $a \mapsto T a T^{-1}, \ a\in A(\Gamma, X, \mu)$. It is shown in~\cite{ForW} that the conjugacy class of every free ergodic action of an amenable group is dense in the weak topology of~$A(\Gamma, X, \mu)$.

We say that a set~$P$ of $\Gamma$--actions is \emph{meager} if its complement contains a dense $G_\del$~subset in~$A(\Gamma, X, \mu)$. We call~$P$ \emph{generic} (or \emph{comeager}) if it contains a dense~$G_\del$ subset itself. It is well known that, for example, the set of all ergodic free actions of a discrete amenable group~$\Gamma$ is generic, as well as the set of all actions with zero measure entropy~(see \cite{ForW, Ke}). 

\subsection{Universal systems}
{Universal dynamical systems appear in various contexts in lots of papers, see~\cite{DS, S, SW, Vep2, VZ2, W}, for example. The exact definition of universality varies from paper to paper. We will mainly follow the one given in~\cite{DS} by T.~Downarowicz and J.~Serafin.
Let~$G$ be an amenable group and~$X$ a metric compact space on which $G$~acts by homeomorphisms. A~topological system~$(X, G)$ is called \emph{universal} for some class~$\mathcal{S}$ of {ergodic} p.m.p. actions of~$G$ if the following two conditions are satisfied. 
For any {ergodic} $G$--invariant measure~$\mu$ on~$X$ the system $(X, \mu, G)$ belongs to $\mathcal{S}$ and for any $(Y,\nu, G) \in \mathcal{S}$ there exists a {$G$--invariant measure~$\mu$ on~$X$} such that  $(X, \mu, G)$ is measure--theoretically isomorphic to $(Y,\nu, G)$.
}

In view of the variational principle, the natural question about the existence of a universal system for the class of all zero entropy systems appears in~\cite{S}. This question goes back to B.~Weiss.

For the case of a single transformation, the negative answer to this question was given in~\cite{S} by J.~Serafin. His poof uses the notions of symbolic and measure--theoretic complexity of a dynamical system (see also~\cite{F}) and constructions of systems with rapidly growing measure--theoretic complexity.
In~\cite{Vep2}, the author extends Serafin's result to non-periodic amenable groups using the scaling entropy invariant, constructions of Vershik's automorphisms (see~\cite{VZ}), and coinduced actions. 

%The author of that work points out that this approach did not work for the case of amenable groups due to insufficient development of the theory of symbolic extensions. Let us remark that the notion of measure--theoretic complexity is closely related to the notion of scaling entropy that we use. The main result of our work is the following theorem, which gives the negative answer to Question~\ref{quest_zeroentropy} in the case of a  non--periodic amenable group $G$.
As a corollary of the results of the present paper, we answer the Weiss' question in full generality.
\begin{thm}\label{thm_univ}
    Every infinite countable discrete amenable group does not admit a universal zero-entropy system.
\end{thm}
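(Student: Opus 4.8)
The strategy is to combine the main result of the paper --- that for any prescribed rate of growth $h$ the set of p.m.p. $G$-actions whose scaling entropy is \emph{not} dominated by $h$ is comeager in $A(G,X,\mu)$ --- with the elementary observation that a single topological system simultaneously bounds the scaling entropy of all of its invariant measures. Throughout, $G$ is an infinite countable discrete amenable group and we argue by contradiction: assume that $(Z,G)$ is a universal zero-entropy system, with $Z$ a metric compact space carrying a fixed compatible metric $d$ and an action of $G$ by homeomorphisms. By the defining property of universality, every ergodic $G$-invariant measure on $Z$ (and hence, by ergodic decomposition, every $G$-invariant measure on $Z$) yields a system of entropy zero, so the variational principle gives $h_{\mathrm{top}}(Z,G)=0$.

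The key preliminary step is a uniform upper bound. Fix a F\o lner sequence $\{F_n\}_{n\ge 1}$ in $G$ and, for $\eps>0$ and $n\ge 1$, let $\mathfrak h_\eps(n)$ be the logarithm of the minimal cardinality of an $\eps$-net of $Z$ with respect to the averaged metric $d_{F_n}(x,y)=\frac1{|F_n|}\sum_{g\in F_n} d(gx,gy)$; this quantity involves no measure. A continuous metric on a compact $G$-space is admissible with respect to every $G$-invariant measure $\mu$, and the measured $\eps$-entropy of the metric measure space $(Z,\mu,d_{F_n})$, which permits discarding a set of measure $\eps$, is at most $\mathfrak h_\eps(n)$. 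Hence, in the sense of Section~\ref{sec_scaling}, the scaling entropy of $(Z,\mu,G)$ is dominated by $\mathfrak h=\{\mathfrak h_\eps\}_{\eps>0}$, \emph{with the bound independent of $\mu$}. Since $h_{\mathrm{top}}(Z,G)=0$, we have $\mathfrak h_\eps(n)=o(|F_n|)$ for each $\eps$, so $\mathfrak h$ is a legitimate subexponential rate of growth to which the main theorem applies.

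Now apply the main theorem with $h=\mathfrak h$: the set $\mathcal N$ of $G$-actions on $(X,\mu)$ whose scaling entropy is not dominated by $\mathfrak h$ is comeager. Intersecting $\mathcal N$ with the comeager sets of ergodic actions and of zero-entropy actions (both recalled in the introduction), we may pick a single p.m.p. action $G\curvearrowright(Y,\nu)$ that is ergodic, has entropy zero, and has scaling entropy not dominated by $\mathfrak h$. Being an ergodic zero-entropy system, $(Y,\nu,G)$ is, by universality of $(Z,G)$, measure-theoretically isomorphic to $(Z,\mu,G)$ for some ergodic $G$-invariant measure $\mu$ on $Z$. Since scaling entropy is an isomorphism invariant, the scaling entropy of $(Z,\mu,G)$ is not dominated by $\mathfrak h$ --- contradicting the uniform bound of the previous paragraph. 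Thus no universal zero-entropy system exists, proving Theorem~\ref{thm_univ}.

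I expect the only point requiring genuine care to be the uniform domination in the second step: one must check that a continuous metric on a compact $G$-space is truly admissible for every invariant measure in the sense used to define scaling entropy, and that passing from the whole space to the support of a measure --- and allowing an $\eps$-fraction of it to be thrown away --- can only lower the covering number, so that $\mathfrak h_\eps(n)$ dominates the measured $\eps$-entropy for all $\mu$ at once. The remaining ingredients are classical: the variational principle, the genericity statements recalled in the introduction, and the isomorphism-invariance of scaling entropy discussed in Section~\ref{sec_scaling}.
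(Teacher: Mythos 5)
Your proof is correct and takes essentially the same route as the paper: the paper likewise combines Theorem~\ref{thm_upbound} with the bridge between ergodic actions of almost complete growth and the non--existence of a universal zero-entropy system, except that it simply cites this bridge as Theorem~\ref{thm_univ1} (from~\cite{Vep2}) rather than reproving it, which is exactly what your compactness/variational-principle argument amounts to. The only detail you leave implicit is the routine diagonalization converting the $\varepsilon$-indexed family $\{\mathfrak h_\varepsilon\}$ (each term being $o(|F_n|)$ by zero topological entropy, and monotone in $\varepsilon$) into a single sequence $\phi(n)=o(|F_n|)$ to which Theorem~\ref{thm_upbound} literally applies.
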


\section{Slow entropy type invariants}
\subsection{Kushnirenko's sequential entropy}
As an intermediate step in our arguments we use the following \emph{sequential entropy} invariant introduced in~\cite{Kush}, or rather its generalized version from~\cite{R}. Let $P = \{P_n\}$ be a sequence of finite subsets in~$G$ and $G \acts{\alpha} (X, \mu)$ be a p.m.p. action of~$G$. For a measurable partition~$\xi$, define its sequential entropy as follows:
\begin{equation}
h_P(G,\xi) = \limsup_{n} \frac{1}{|P_n|} H\lr{\bigvee_{g\in P_n} g^{-1}\xi}.
\end{equation}
The sequential entropy along~$P$ of the action is the following supremum: 
\begin{equation}
h_P(X, \mu, G) = \sup_{\xi \colon H(\xi) < \infty} h_P(G, \xi).
\end{equation}

\subsection{Scaling entropy}\label{sec_scaling}
In this section, we give a brief introduction to the theory of scaling entropy. This invariant was introduced by A.~Vershik in his papers~\cite{V2010, V1,V3} and was further developed by F.~Petrov and P.~Zatitskiy in~\cite{VPZ, PZ, Z1, Z2}. The main Vershik's idea is to consider dynamical properties of functions of several variables, namely, measurable metrics and semimetrics (quasimetrics). 

Let us mention that the closely--related notions appear in several papers by S.~Ferenczi (\emph{measure--theoretic complexity}{, see, e.\,g.,~\cite{F}})  and A.~Katok \& J.-P.~Thouvenot (\emph{slow entropy}{, see~\cite{KT}}). We~refer the reader to survey~\cite{KKW} for details on these invariants.

Throughout this paper, we use the following notations. For two sequences~$\phi {=\{\phi(n)\}_n}$ and $\psi {=\{\psi(n)\}_n}$ of positive numbers, we write~$\phi \precsim \psi$ if the asymptotic relation $\phi(n) = O (\psi(n))$ is satisfied. We write~$\phi \asymp \psi$ if both inequalities~$\phi \precsim \psi$ and~$\psi \precsim \phi$ hold and~$\phi \prec \psi$ if~$\phi(n) = o(\psi(n))$. 

\subsubsection{Epsilon--entropy and measurable semimetrics}
Consider a measurable function $\rho\colon (X^2, \mu^2) \to [0, +\infty)$. We call $\rho$ a measurable semimetric if it is non--negative, symmetric, and satisfies the triangle inequality. 
For a positive $\eps$, the \emph{$\eps$--entropy} of the semimetric $\rho$ is defined in the following way. Let $k$ be the minimal positive integer such that the space~$X$ decomposes into a union of measurable subsets $X_0, X_1, \ldots, X_k$ with $\mu(X_0) < \eps$ and $\diam_\rho(X_i) < \eps$ for all $i>0$. Put
\begin{equation}
\mbbH_\eps(X, \mu, \rho) = \log_2 k.
\end{equation}
If there is no such finite $k$, we define $\mbbH_\eps(X, \mu, \rho) = + \infty$.

We call a semimetric \emph{admissible} if it is separable on some subset  of full measure. It turns out (see~\cite{VPZ}) that a semimetric is admissible if and only if its $\eps$--entropy is finite for any $\eps > 0$. 
In this paper, we consider only admissible semimetrics. A simple example of such a semimetric is so-called \emph{cut semimetric}~$\rho_\xi$ corresponding to a measurable partition~$\xi$ with finite Shannon entropy. That is, $\rho(x, y) = 0$ if both points $x,y \in X$ lie in the same cell of~$\xi$, and $\rho(x, y) = 1$ otherwise. 

{The space~$\Adm(X,\mu)$ of all summable admissible semimetrics is a convex cone in $L^1(X^2,\mu^2)$. Define the following \emph{$m$--norm} on a linear subspace of $L^1(X^2,\mu^2)$ containing $\Adm$:
\begin{equation}
\norm{f}_m = \inf \{\norm{\rho}_{L^1(X^2,\mu^2)}\ :\ \rho(x,y) \ge \abs{f(x,y)}, \ \mu^2\text{--a.s.}\},
\end{equation}
where the infimum is computed over all measurable semimetrics~$\rho$, see~\cite{VPZ, Z1} for details. }

%The $\eps$--entropy as function of the semimetric is locally bounded in $m$--norm in the following sense~\cite{Z}.
%For any two metrics 

\subsubsection{Scaling entropy of a group action}

Let~$G$ be a countable amenable group with some given F\o lner sequence $\lambda = \{F_n\}$ which we will call \emph{equipment} of the group~$G$. We will refer to the pair~$(G, \lambda)$ as an \emph{equipped group}. Let us remark right away that the scaling entropy invariant is well defined beyond amenable groups and F\o lner sequences. The only assumption one needs to make is the requirement of equipment to be~\emph{suitable}~(see \cite{Z2} for details), a sequence of increasing balls in a finitely generating group may be viewed as an example. However, we restrict our considerations to the case of amenable groups since we will deal only with them in this paper. 

Suppose that $G \acts{\alpha} \lr{X,\mu}$ is a p.m.p. action of~$G$ on a Lebesgue space $\lr{X,\mu}$. For a measurable semimetric~$\rho$ and an element $g \in G$, let $g^{-1}\rho$ denote a translation of~$\rho$:  $g^{-1}\rho(x,y) = \rho(gx, gy)$, where $x,y \in X$. Note, that if $\rho$ is admissible, then $g^{-1}\rho$ is admissible as well. A semimetric is said to be~\emph{generating} if all its translations together separate points of~the measure space up to a null~set. 

Consider the average of~$\rho$ over~$F_n$
\begin{equation}\label{eq_383647}
G^n_{av} \rho (x,y) = \frac{1}{\abs{F_n}}\suml_{g\in F_n} \rho(gx, gy), \qquad x,y \in X.
\end{equation}
We will also denote the same semimetric~\eqref{eq_383647} by the symbol $G^{F_n}_{av} \rho$ emphasizing the set of elements used to compute the average and $\alpha^{n}_{av} \rho$ emphasizing the action. 
Consider the following function:
\begin{equation}
\Phi_\rho(n, \eps) = \mbbH_\eps\lr{X, \mu, G^n_{av}\rho}.
\end{equation}
By definition, $\Phi_\rho(n,\eps)$ depends on $n$, $\eps$, and the semimetric $\rho$. However, its \emph{asymptotic behaviour~in~$n$} is supposed to be independent of~$\rho$ and~$\eps$ in some sense (see~\cite{V1, V3}). The strongest form of such independence corresponds to the following notion from~\cite{VPZ, Z1}. A sequence~$\{h_n\}$ is called a \emph{scaling entropy sequence} for~$\rho$ if $\Phi_\rho(n,\eps) \asymp h_n$ for all sufficiently small $\eps > 0$. P.~Zatitskiy showed in~\cite{Z1, Z2} that if a sequence~$\{h_n\}$ is a scaling entropy sequence for some generating~$\rho \in \Adm$, then it is also a scaling entropy sequence for any other such semimetric. Hence, the class of all scaling entropy sequences forms an invariant of the action. This invariant was studied in~\cite{VPZ, PZ, Vep1, V3, Z1, Z2}.

Although there are a lot of nice nontrivial cases where the scaling entropy sequence can be computed (see, e.\,g.,~\cite{Z2}), it does not always exist in this strong form, as shown in~\cite{Vep1}. In order to cover all of the cases, we use more general approach. We consider the set of functions mapping $\mbbN \times \mbbR_+$ to $\mbbR_+$ that decrease in their second arguments. Then we extend the relation~$\precsim$ to this set by setting for two functions~$\Phi$ and~$\Psi$
\begin{equation}\label{eq_po}
    \Phi \precsim \Psi \iff \forall \eps > 0\ \exists \delta > 0 \ \Phi(n, \eps) \precsim \Psi(n, \del).
\end{equation}
We call~$\Phi$ and~$\Psi$ equivalent (and write $\Phi \asymp \Psi$) if both relations $\Phi \precsim \Psi$ and $\Psi \precsim \Phi$ are satisfied.
The Zatitskiy's invariance theorem {from}~\cite{Z1, Z2} states that for any two generating semimetrics~$\rho$ and~$\omega$ in~$\Adm$ the following equivalence takes place: $\Phi_\rho \asymp \Phi_\omega$. Therefore, the equivalence class $\mathcal{H}(X,\mu, G, \lambda) = \wpart{\Phi_\rho}$ is an invariant of a p.m.p. action of an equipped group. We call this class \emph{the scaling entropy} of the action. We will also write  $\mathcal{H}(\alpha, \lambda)$ referring to the scaling entropy of a p.m.p. action~$\alpha$.

Also, we write~$\Phi \prec \Psi$ if there exists~$\delta >0$ such that for any~$\eps > 0$ we have $\Phi(n, \eps) \prec \Psi(n, \del)$. Clearly, relations~$\prec$ and~$\precsim$ agree with the equivalence relation~$\asymp$ and induce partial orders on the set of equivalence classes.

\section{Main results}
In this paper, we study the scaling entropy of a generic action. In Section~\ref{sec_upbound}, we look for p.m.p. actions whose scaling entropy can not be bounded by a given function. In~\cite{Vep2}, such actions are called \emph{actions of almost complete growth} and constructed \emph{explicitly} for any non--periodic amenable group~$G$. Such explicit constructions for general amenable groups are unknown. We prove that actions of almost complete growth are generic in the following sense. 

\begin{thm}\label{thm_upbound}
Let $G$~be a countable amenable group and $\lambda = \{F_n\}$ be a F\o lner sequence in~$G$. Let $\phi(n) = o(\abs{F_n})$ be a sequence of positive real numbers. Then the set of all zero-entropy ergodic p.m.p. actions of~$G$ that satisfy
\begin{equation}
\Phi(n, \eps) \not \precsim \phi(n) \text{ for sufficiently small $\eps >0$},
\end{equation}
where $\Phi \in \mathcal{H}(\alpha, \lambda)$, contains a dense $G_\del$-subset in~$A(G, X, \mu)$.
\end{thm}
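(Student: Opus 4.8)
**The plan is to build the required dense $G_\delta$ set as a countable intersection of open dense sets, each asserting that some finite-entropy partition has large averaged $\epsilon$-entropy along a suitable chunk of the Følner sequence.**

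The plan is to realize the desired set as the intersection of three comeager sets. Two of them --- the ergodic actions and the zero-entropy actions --- are already known to contain dense $G_\delta$'s (see \cite{ForW, Ke}), so all the work goes into the third: a set $\mathcal{G}_0$ of ``almost complete growth'' actions which I will show is itself a dense $G_\delta$. By Baire's theorem the intersection of the three is again a dense $G_\delta$, and this is the set claimed in the theorem. I fix, once and for all, a generating sequence $\{\xi_k\}$ of finite measurable partitions of $(X,\mu)$ and put $\rho_0 = \sum_k 2^{-k}\rho_{\xi_k}$. Then $\rho_0$ is a summable admissible semimetric whose identity translate already separates points, so it is generating for \emph{every} p.m.p. action of $G$; by the invariance theorem of \cite{Z1, Z2}, $\mathcal{H}(\alpha,\lambda) = [\Phi_{\rho_0}^\alpha]$ with $\Phi_{\rho_0}^\alpha(n,\eps) = \mbbH_\eps(X,\mu,\alpha^n_{av}\rho_0)$. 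Since $\Phi_{\rho_0}^\alpha(n,\eps)$ is non-increasing in $\eps$, the conclusion of the theorem for $\alpha$ is equivalent to $\limsup_n \Phi_{\rho_0}^\alpha(n,\eps_0)/\phi(n) = \infty$ for one fixed (hence all smaller) $\eps_0>0$. I take the absolute constant $\eps_0 = 1/16$ and set $\mathcal{G}_0 = \{\alpha : \limsup_n \Phi_{\rho_0}^\alpha(n,\eps_0)/\phi(n) = \infty\}$.

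That $\mathcal{G}_0$ is $G_\delta$ is the routine half. Writing $\mathcal{G}_0 = \bigcap_{M\ge 1}\bigcap_{N\ge 1}\bigcup_{n\ge N}\{\alpha : \Phi_{\rho_0}^\alpha(n,\eps_0) > M\phi(n)\}$, I need each set $\{\alpha : \Phi_{\rho_0}^\alpha(n,\eps_0) > M\phi(n)\}$ to be open, i.e. $\alpha \mapsto \Phi_{\rho_0}^\alpha(n,\eps_0)$ to be lower semicontinuous. For fixed $n$ the semimetric $\alpha^n_{av}\rho_0$ depends only on the finitely many automorphisms $\alpha(g)$, $g\in F_n$, and $\alpha \mapsto \alpha^n_{av}\rho_0$ is continuous into $L^1(X^2,\mu^2)$ (using weak continuity of $T\mapsto T^{-1}a$ and the uniform tail bound $\sum_{k>K}2^{-k}\rho_{\xi_k}\le 2^{-K}$); and $\rho \mapsto \mbbH_\eps(X,\mu,\rho)$ is lower semicontinuous under $L^1$-convergence of admissible semimetrics. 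The latter follows from the $m$-norm estimates of \cite{VPZ, Z1}; alternatively one reduces, via the pointwise sandwich $2^{-K}\rho_{\eta_K} \le \rho_0 \le \rho_{\eta_K} + 2^{-K}$ with $\eta_K = \bigvee_{k\le K}\xi_k$, to the cut semimetrics $\rho_{\eta_K}$, for which $\mbbH_\eps(X,\mu,\alpha^n_{av}\rho_{\eta_K})$ is the $\log_2$ of the least number of Hamming $\eps$-balls covering mass more than $1-\eps$ of the $(\eta_K,F_n)$-name distribution --- the $\log_2$ of a lower-semicontinuous functional of a finite distribution that itself varies continuously (in total variation) with $\alpha$.

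The heart of the matter is the \textbf{density of $\mathcal{G}_0$}. Given a nonempty basic open $U$ and $M,N$, I must produce $\alpha\in U$ and some $n\ge N$ with $\Phi_{\rho_0}^\alpha(n,\eps_0) > M\phi(n)$; since $\rho_0 \ge \tfrac12\rho_{\xi_1}$ gives $\Phi_{\rho_0}^\alpha(n,\eps_0) \ge \Phi_{\rho_{\xi_1}}^\alpha(n,2\eps_0) = \Phi_{\rho_{\xi_1}}^\alpha(n,\tfrac18)$, it suffices to make $\Phi_{\rho_{\xi_1}}^\alpha(n,\tfrac18) > M\phi(n)$ for the fixed two-set partition $\xi_1$. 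After replacing the action anchoring $U$ by a nearby free ergodic one, I plan to use the Ornstein--Weiss Rokhlin lemma to obtain a tower over a F\o lner set $F_n$ with $|F_n|$ large (so $|F_n|\gg \phi(n)$) and negligible error, and then to modify the action inside this tower --- a cut-and-stack / coinduction-type construction in the spirit of \cite{Vep2}, and of \cite{A, Vep3} for $G=\mbbZ$, but carried out \emph{locally} --- so as to plant, on a set of positive measure, at least $2^{c|F_n|}$ distinct $\xi_1$-names over $F_n$ that are pairwise $\tfrac14$-separated in normalized Hamming distance (the existence of such a code is the Gilbert--Varshamov bound). The decisive constraint is that the perturbation must move the finitely many sets defining $U$ only by a small measure, so that the modified action stays in $U$; once that is arranged, a Hamming-ball volume count gives $\Phi_{\rho_{\xi_1}}^\alpha(n,\tfrac18) \ge \tfrac12 c|F_n| > M\phi(n)$. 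I expect essentially all the difficulty to sit here: one must install complexity that the generating semimetric $\rho_0$ sees \emph{globally} along $F_n$ while keeping the perturbation \emph{locally} supported, so as not to leave the prescribed neighbourhood. Granting this, $\mathcal{G}_0$ is a dense $G_\delta$, and intersecting with the comeager sets of ergodic and of zero-entropy actions yields a dense $G_\delta$ of zero-entropy ergodic actions $\alpha$ with $\Phi\not\precsim\phi$ for $\Phi\in\mathcal{H}(\alpha,\lambda)$. Note that it is the Baire-category intersection, not the construction, that supplies the zero-entropy members --- the construction only witnesses density.
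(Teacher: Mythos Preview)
Your architecture is sound, but the $G_\delta$ half has a genuine gap: the map $\rho\mapsto\mbbH_\eps(X,\mu,\rho)$ is \emph{upper} semicontinuous, not lower semicontinuous. In your own reduction to the finite name space with the Hamming metric, the quantity is $\log_2$ of the minimal $k$ for which $k$ sets of diameter $<\eps$ cover mass $>1-\eps$; if such a cover exists for the limiting name law $\nu$, the very same sets work for every nearby $\nu_j$, so the sublevel set $\{\mbbH_\eps\le\log_2 k\}$ is open. Concretely, on $\{0,1\}^2$ with $\nu_j=(\tfrac12+\tfrac1j)\delta_{00}+(\tfrac12-\tfrac1j)\delta_{11}\to\nu$ one has $\mbbH_{1/2}(\nu_j)=0$ but $\mbbH_{1/2}(\nu)=1$. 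The $m$-norm estimate you invoke (Lemma~\ref{lm_mnorm}) says only that $\mbbH_{\eps}(\rho_1)<\mbbH_{\eps/4}(\rho_2)$ for $\rho_1$ near $\rho_2$, i.e.\ continuity \emph{modulo a change of $\eps$}; it does not yield lower semicontinuity at a fixed $\eps_0$. Consequently your sets $\{\alpha:\Phi_{\rho_0}^\alpha(n,\eps_0)>M\phi(n)\}$ are closed rather than open, $\mathcal{G}_0$ is a priori only $F_{\sigma\delta}$, and Baire does not apply as written.

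The paper sidesteps this by routing through \emph{sequential entropy}: the Shannon functional $\alpha\mapsto H\bigl(\bigvee_{g\in P}\alpha(g)^{-1}\xi\bigr)$ is genuinely weakly continuous, so the comeager set is first produced at that level (Proposition~\ref{prop_kush}, using that Bernoulli conjugates are dense) and only afterwards converted into a scaling-entropy lower bound via Lemmas~\ref{lm_lowerbound} and~\ref{lm_partitions} (Proposition~\ref{prop_rel}); the bridge is a graph-colouring decomposition of each $F_n$ into $k_n\prec|F_n|/\phi(n)$ pieces in which no two elements differ by a short group element. Your Rokhlin-tower density scheme is in a different spirit and, as you acknowledge, underspecified; note that density by itself is cheap here, since any Bernoulli action already lies in $\mathcal{G}_0$. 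The real obstruction is the $G_\delta$ structure, and for that you must either pass to a continuous surrogate as the paper does, or explicitly interleave two values of $\eps$ via Lemma~\ref{lm_mnorm} so as to sandwich each closed superlevel set inside the interior of the one at a smaller $\eps$.
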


We also study lower bounds for the scaling entropy of a generic action. For any residually finite group the similar result holds true.
\begin{thm}\label{thm_resfin}
    Let~$G$ be an infinite countable residually finite amenable group with a F\o lner sequence~$\lambda$ and~$\phi(n)$ a function with $\lim_n \phi(n) = \infty$. Then the set of all p.m.p. $G$--actions satisfying $\mathcal{H}(\alpha, \lambda) \succ \phi$ is meager.
\end{thm}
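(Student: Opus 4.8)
The plan is to exhibit a dense $G_\delta$ set of actions $\alpha\in A(G,X,\mu)$ with $\mathcal H(\alpha,\lambda)\not\succ\phi$. Fix once and for all an increasing sequence $\xi_1\le\xi_2\le\dots$ of finite measurable partitions of $(X,\mu)$ whose join is the whole $\sigma$--algebra, and set $\rho=\suml_{k\ge1}2^{-k}\rho_{\xi_k}\in\Adm(X,\mu)$. Since the $\xi_k$ already separate points modulo a null set, $\rho$ is a generating semimetric for \emph{every} p.m.p. action of $G$, so Zatitskiy's invariance theorem gives $\Phi_\rho\in\mathcal H(\alpha,\lambda)$ for all $\alpha$. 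Unwinding the definition of $\prec$, the relation $\mathcal H(\alpha,\lambda)\succ\phi$ says that $\phi(n)=o(\Phi_\rho^{\alpha}(n,\del))$ for some $\del>0$; since $\mbbH_\eps$ is non--increasing in $\eps$, it therefore suffices to produce a dense $G_\delta$ set of actions $\alpha$ with the property that for every $j\in\mbbN$ there are infinitely many $n$ such that $\Phi_\rho^{\alpha}(n,1/j)\le\phi(n)$.

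Two facts go into this. First, a uniform bound for \emph{periodic} actions, by which I mean actions $\beta\in A(G,X,\mu)$ that factor through a finite quotient $Q=G/N$: for such $\beta$ the averaged semimetric $G^n_{av}\rho$ is, for every $n$, a convex combination of the $|Q|$ semimetrics $g^{-1}\rho$ with $g$ running over a transversal of $N$, hence is pointwise dominated by their sum, and subadditivity of the $\eps$--entropy together with the fact that each $g^{-1}\rho$ has the same $\eps$--entropy as $\rho$ (it is $\rho$ composed with a measure--preserving bijection) gives $\Phi_\rho^{\beta}(n,\eps)\le|Q|\cdot\mbbH_{\eps/|Q|}(X,\mu,\rho)=:C(Q,\eps)$ for \emph{all} $n$. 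Second, the density of periodic actions in $A(G,X,\mu)$, which is where residual finiteness enters: fixing a chain $G=G_0\supseteq G_1\supseteq\dots$ of finite--index normal subgroups with $\bigcap_m G_m=\{e\}$, the $G$--odometer on $\widehat G=\varprojlim G/G_m$ is a free ergodic p.m.p. action which is a weak limit of periodic actions (those obtained from measurable sections of the projections $\widehat G\to G/G_m$, factoring through $G/G_m$). Since conjugating a periodic action keeps it periodic and conjugation is a homeomorphism of $A(G,X,\mu)$, the Foreman--Weiss density of the conjugacy class of a free ergodic action of an amenable group upgrades to density of the whole set of periodic actions.

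The Baire--category step is then routine. Given a periodic $\beta$ through $Q$ and $j,N\in\mbbN$, first choose $n=n(\beta,j,N)\ge N$ with $\phi(n)>C(Q,1/(2j))$, possible since $\phi(n)\to\infty$. For this fixed $n$ the map $\alpha\mapsto G^n_{av}\rho$ is continuous from $A(G,X,\mu)$ into $L^1(X^2,\mu^2)$ --- after discarding an arbitrarily small $L^1$--tail of the series defining $\rho$, it depends on $\alpha$ only through the finitely many sets $\alpha(g)^{\pm1}A$ with $g\in F_n$ and $A$ a cell of the relevant partitions. Hence, by the standard quasi--continuity of $\mbbH_\eps$ under small $L^1$--perturbations of a semimetric, there is an open neighbourhood $U(\beta,j,N)\ni\beta$ on which $\Phi_\rho^{\alpha}(n,1/j)\le\Phi_\rho^{\beta}(n,1/(2j))\le C(Q,1/(2j))<\phi(n)$. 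The sets $W_{j,N}=\bigcup_\beta U(\beta,j,N)$, the union over all periodic $\beta$, are open, and they are dense since they contain all periodic actions; so $\bigcap_{j,N\ge1}W_{j,N}$ is a dense $G_\delta$, and every $\alpha$ in it has the property from the first paragraph, hence $\mathcal H(\alpha,\lambda)\not\succ\phi$.

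The crux is the density of periodic actions. For $G=\mbbZ$ this is the classical approximation of measure--preserving transformations by periodic ones, but for a general residually finite amenable $G$ it has to be put together from the odometer and the Foreman--Weiss theorem as above, and this is exactly the step that fails without residual finiteness --- consistently with the examples of Section~\ref{sec_egg}. The only other delicate point is the simultaneous bookkeeping of the parameters $n$ and $\eps$ in the definitions of $\prec$ and $\Phi_\rho$ and the failure of $\mbbH_\eps$ to be literally continuous in the semimetric, both handled by the Petrov--Zatitskiy quasi--continuity estimates for the $\eps$--entropy.
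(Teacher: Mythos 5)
Your proof is correct, and while it shares the Baire--category skeleton of the paper's argument (a dense family of low--complexity actions, an entropy estimate at a single well--chosen time $n$ transferred to a weak neighbourhood via the $m$--norm quasi--continuity of $\mbbH_\eps$, and a countable intersection of open dense sets), the key inputs are genuinely different. The paper deduces Theorem~\ref{thm_resfin} from Theorem~\ref{thm_lowbound}: it takes the compact (profinite/odometer) free ergodic action of the residually finite group, invokes the equivalence between compactness and boundedness of scaling entropy, and uses Foreman--Weiss to make the conjugacy class of that \emph{single} action the dense witness family; the smallness of $\mbbH_{\eps}((\alpha_q)^{j}_{av}\rho)$ at well--chosen times $j$ then comes from $\mathcal{H}(\alpha_q,\lambda)\prec\phi$. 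You instead take the witness family to be \emph{all} actions factoring through finite quotients, prove for them a bound uniform in $n$, namely $\Phi_\rho^{\beta}(n,\eps)\le|Q|\,\mbbH_{\eps/|Q|}(X,\mu,\rho)$, by elementary monotonicity and subadditivity of $\mbbH_\eps$ (so no appeal to the compactness--versus--bounded--scaling--entropy theorem is needed), and you pay for this with an extra step: density of periodic actions, which you correctly obtain by approximating the $G$--odometer weakly by its finite factors and then conjugating via Foreman--Weiss, using that conjugation preserves periodicity. Both routes use residual finiteness only to produce a separating chain of finite--index normal subgroups and amenability only through Foreman--Weiss; your explicit bookkeeping with $n\ge N$ and the intersection over $N$ also makes the ``infinitely many $n$'' conclusion cleaner. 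One cosmetic point: the quasi--continuity lemma actually available in the paper (Lemma~\ref{lm_mnorm}) degrades $\eps$ to $\eps/4$ rather than $\eps/2$, so your neighbourhood should be arranged to give $\Phi^{\alpha}_\rho(n,1/j)\le\Phi^{\beta}_\rho(n,1/(4j))\le C(Q,1/(4j))<\phi(n)$; this changes nothing in the argument.
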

However, there exist groups with the property that the scaling entropy of any free p.m.p. action has to grow faster than a given function. We call this property \emph{a scaling entropy growth gap}. In Section~\ref{sec_egg}, we give an example of such a group (Theorem~\ref{thm_egg}) and prove that this property does not depend on the choice of F\o lner sequence. 

\section{Generic actions of almost complete growth}\label{sec_upbound}
\subsection{Sequential entropy of generic actions}
In \cite{R}, V. Ryzhikov proves that the set of all automorphisms~$T \in \Aut(X,\mu)$ such that $h_P(T) = +\infty$ contains a dense $G_\delta$ subset of $ \Aut(X,\mu)$ provided $\min\{|x - y| \colon x, y \in P_n,\ x\not = y\} $ goes to infinity. We use this approach to obtain the following proposition.

\begin{prop}\label{prop_kush}
Let $G$ be a countable amenable group and $\{P_n^l\}_{n = 1, \ldots, \infty}^{l = 1, \ldots, k_n}$ be a family of finite subsets of $G$ such that for any finite $K \subset G$, any sufficiently large $n$, and $g,h \in P_n^l$ we have $gh^{-1} \not \in K$ for all $l = 1, \ldots, k_n$. Then the set of all actions of $G$ on $(X, \mu)$ satisfying 
\begin{equation}\label{eq_365686765}
\supl_\xi \limsup\limits_n \min\limits_{l = 1, \ldots, k_n} \frac{1}{\abs{P_n^l}} H\lr{\bigvee_{g \in P_n^l} g^{-1}\xi} = +\infty,
\end{equation}
where supremum is computed over all finite measurable partitions, is comeager.
\end{prop}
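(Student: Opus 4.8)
The plan is to produce a dense $G_\delta$ set inside the set $S$ of actions satisfying~\eqref{eq_365686765}, built as a countable intersection of dense $G_\delta$ sets indexed by an ``entropy level'', each of which is in turn reduced to a single density statement proved by a Rokhlin / quasi‑tiling construction in the spirit of~\cite{R}. Fix an increasing sequence of finite partitions $\xi_N$ of $(X,\mu)$ with $H(\xi_N)\to\infty$ (e.g.\ $\xi_N$ with $\lceil 2^N\rceil$ atoms of equal measure) and put, for a p.m.p.\ action $\alpha$,
\[
f_n^{(N)}(\alpha)=\min_{l\le k_n}\frac{1}{\abs{P_n^l}}\,H\!\lr{\bigvee_{g\in P_n^l}\alpha(g)^{-1}\xi_N}.
\]
Since $\alpha\mapsto\alpha(g)^{-1}\xi_N$ is weakly continuous, finite joins are continuous, and Shannon entropy is continuous on partitions with a bounded number of atoms, each $f_n^{(N)}$ is continuous on $A(G,X,\mu)$; moreover $f_n^{(N)}(\alpha)\le H(\xi_N)$ for all $n$, because $H$ of a join never exceeds the sum of the entropies. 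Hence
\[
G_N:=\Big\{\alpha:\ \limsup_n f_n^{(N)}(\alpha)=H(\xi_N)\Big\}
=\bigcap_{n_0}\bigcap_{j}\ \bigcup_{n\ge n_0}\Big\{\alpha:\ f_n^{(N)}(\alpha)>H(\xi_N)-\tfrac1j\Big\}
\]
is a $G_\delta$ subset of $A(G,X,\mu)$, and if every $G_N$ is dense then $\bigcap_N G_N$ is a dense $G_\delta$ on which the supremum in~\eqref{eq_365686765} is $\ge H(\xi_N)$ for every $N$, hence infinite. By the Baire category theorem the density of $G_N$ reduces to the density of each open set $\bigcup_{n\ge n_0}\{\alpha:f_n^{(N)}(\alpha)>H(\xi_N)-\tfrac1j\}$, i.e.\ to the following claim, to which I would devote the bulk of the proof.

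\emph{Core density claim.} Let $\xi$ be a finite partition, say with $m$ atoms of equal measure, and let $\delta>0$, $n_0\in\mbbN$. Then the set of $\alpha$ for which $\min_{l\le k_n}\tfrac1{\abs{P_n^l}}H\lr{\bigvee_{g\in P_n^l}\alpha(g)^{-1}\xi}>H(\xi)-\delta$ holds for some $n\ge n_0$ is dense in $A(G,X,\mu)$. To prove it, fix a basic weak neighbourhood $U$ of an action $\beta$, given by a finite symmetric $Q\ni e$, a finite partition $\zeta\succeq\xi$, and $\eps>0$; since free ergodic actions are dense we may assume $\beta$ free and ergodic. Using the separation hypothesis, choose $n\ge n_0$ so large that for every $l\le k_n$ the elements of $P_n^l$ are pairwise $K$‑separated, where $K$ is a finite set built from $Q$, large enough for the estimates below; write $P_n^\ast=\bigcup_{l\le k_n}P_n^l$. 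Apply the Ornstein--Weiss / Rokhlin lemma to $\beta$ to obtain a quasi‑tiling castle with sufficiently invariant shapes $F_i\supseteq P_n^\ast$ and bases $B_i$ chosen so that each $B_i$ is (approximately) independent of the finite partition $\bigvee_{g\in P_n^\ast}\beta(g)^{-1}\xi$, the castle exhausting $X$ up to an arbitrarily small set. One then builds $\alpha$ that coincides with $\beta$ at the scale of $Q$ --- acting by left translation on the $F_i$‑coordinate inside each tile, up to a small error --- but whose large‑scale identifications are re‑glued within the columns of the castle so that, for each column, the tuple of $\xi$‑labels read along $P_n^\ast$ is uniformly distributed on $\{1,\dots,m\}^{P_n^\ast}$. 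The $K$‑separation puts these labels in mutually disjoint $Q$‑neighbourhoods, so they can be prescribed independently, and the re‑gluing is interpolated through steps small enough that $\alpha(q)$ and $\beta(q)$ disagree on a set of measure $<\eps$ for every $q\in Q$, giving $\alpha\in U$. For this $\alpha$ and every $l\le k_n$, the partition $\bigvee_{g\in P_n^l}\alpha(g)^{-1}\xi$ is, off a set of small measure, the image of a uniform $\{1,\dots,m\}^{P_n^\ast}$‑valued name under the coordinate projection onto $P_n^l$, whence $H\lr{\bigvee_{g\in P_n^l}\alpha(g)^{-1}\xi}\ge(1-o(1))\abs{P_n^l}\log m$; for $n$ large this makes $\min_{l\le k_n}$ exceed $H(\xi)-\delta$. (Alternatively, the claim could be deduced from a version of the density theorem of~\cite{ForW} for conjugation by the subgroup of $\Aut(X,\mu)$ stabilising $\xi$, applied to a suitable Bernoulli‑type model; either route makes the same construction the heart of the matter.)

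The main obstacle is exactly this construction, and its delicate points are threefold: (i) arranging the castle so that the relevant finite refinement of $\xi$ by $\beta$‑translates is spread evenly over the tiles, which is what allows the label distributions to be corrected to uniform; (ii) carrying out the re‑gluing so that it defines a genuine p.m.p.\ $G$‑action and keeps $\alpha$ inside $U$ --- this is precisely where the $K$‑separation of the $P_n^l$ is used, so that the perturbation is supported away from the $Q$‑neighbourhoods controlling membership in $U$ and so that distinct label prescriptions do not interfere; and (iii) making the argument uniform over all $l\le k_n$, which is handled by working with the union $P_n^\ast$ and invoking that a marginal of a uniform distribution is uniform. The remaining steps --- the reduction via Baire category, the continuity statements, and the quasi‑tiling bookkeeping --- are routine.
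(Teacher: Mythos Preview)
Your Baire--category scaffold is sound and matches the paper's: both express the target set as a countable intersection of open sets defined via the continuous functionals $\alpha\mapsto \min_{l}\tfrac{1}{|P_n^l|}H\bigl(\bigvee_{g\in P_n^l}\alpha(g)^{-1}\xi\bigr)$, and both reduce everything to a single density statement. The real difference is how that density is established.

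The paper never perturbs an arbitrary $\beta$. Instead it fixes a countable dense family $\{\alpha_q\}$ lying in the conjugacy class of a single Bernoulli $G$--action (dense by Foreman--Weiss~\cite{ForW}) and observes that for each such $\alpha_q$ every fixed finite partition $\xi_i$ is approximable by a cylindrical one; the separation hypothesis on $\{P_n^l\}$ then makes the $P_n^l$--translates of that cylinder \emph{exactly independent} once $n$ is large, so $R(\alpha_q,\xi_i,n)>H(\xi_i)-\tfrac1k$ drops out immediately. Density of $\{\alpha_q\}$ plus openness of the inequality finishes in a few lines --- no quasi-tiling, no re-gluing, no construction at all.

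Your main route, building $\alpha$ by re-gluing a quasi-tiling of $\beta$, is plausible but the sketch leaves the genuinely hard step open. For a general amenable $G$, ``acting by left translation on the $F_i$-coordinate inside each tile'' does not by itself define a $G$-action (it only specifies $\alpha(g)$ on points $f\cdot b$ with $gf\in F_i$), and turning the re-labelled castle into an honest p.m.p.\ $G$-action that simultaneously stays in the weak neighbourhood $U$ and has the prescribed $\xi$-names along $P_n^\ast$ is essentially an Ornstein--Weiss copying/isomorphism argument. It can be done, but it is substantially heavier than necessary. Your parenthetical alternative via Foreman--Weiss and a Bernoulli model is in fact the paper's route, and no ``stabilising $\xi$'' refinement is needed: ordinary density of the Bernoulli conjugacy class suffices, because once the action is Bernoulli any fixed finite partition is already approximable by cylinders measurable with respect to \emph{that} action's generator.
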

\begin{proof}
Let $\{\xi_i\}_{i = 1}^{\infty}$ be a dense family of finite measurable partitions of $(X,\mu)$. Consider a countable dense family~$\{\alpha_q\}_{q\in I}$ of Bernoulli $G$--actions. Such family exists in the conjugacy class of any Bernoulli action. For any $q \in I$ and any $k > 0$ there exists some $j_{k,q} > k$ such that for any $j \ge j_{k,q}$  
\begin{equation}\label{thm_kush_eq1}
    R(\alpha_q, \xi_i, j) =
    \min\limits_{l = 1, \ldots, k_j} \frac{1}{\abs{P_j^l}} H\lr{\bigvee_{g \in P_j^l} \alpha_q(g)^{-1}\xi_i} > H(\xi_i) - \frac{1}{k}, \quad i = 1, \ldots, k.
\end{equation}
Indeed, since $\alpha_q$ is Bernoulli, every partition~$\xi_i$ can be approximated by a cylindrical partition whose translations over $P_j^l$ are independent for sufficiently large~$j$ and $l = 1, \ldots, k_j$ due to our assumptions on family~$\{P_j^l\}$. 
Since the function $R(\alpha, \xi_i, j_{k,q})$ is weakly continuous in~$\alpha$,
%Since the left hand side of~\eqref{thm_kush_eq1} is weakly continuous in~$\alpha_q$, 
the set~$U_{k,q}$ of all p.m.p.~actions~${\alpha \in }A(G, X, \mu)$ satisfying $R(\alpha, \xi_i, j_{k,q}) > H(\xi_i) - \frac{1}{k}$ for every $i = 1, \ldots, k$ is weakly open. Consider then the following set
\begin{equation}
W = \bigcap\limits_k\bigcup\limits_q U_{k,q}.
\end{equation}
Clearly, $W$ is $G_\del$, contains every~$\alpha_q$, and, therefore, is dense. Every action in~$W$ satisfies the desired condition~\eqref{eq_365686765}. Indeed, for $\alpha \in W$, for every~$i > 0$ and every~$k > i$, there is some $q(k)$ such that $R(\alpha, \xi_i, j_{k, q(k)}) > H(\xi_i) - \frac{1}{k}$. Hence, $\limsup_n R(\alpha, \xi_i, n) \ge H(\xi_i)$ and, since $\{\xi_i\}$ is dense, $\sup_\xi \limsup_n R(\alpha, \xi, n) = +\infty$.
\end{proof}
\subsection{Proof of Theorem~\ref{thm_upbound} and non--existence of a universal zero-entropy system}
%This section is devoted to establishing dependencies between Kushnirenko's sequential and scaling entropy. 
In this section, we prove Theorem~\ref{thm_upbound} and obtain Theorem~\ref{thm_univ} as its corollary. We find it easier to verify desired generic properties for sequential entropy first and then transfer them to scaling entropy having certain relations between these two invariants in hand. Finally, the scaling entropy invariant has deeper connections to the topological entropy that play its role in proving the non--existence of a universal zero entropy system. 
The direct proof without sequential entropy also seems possible. It would, however, involve some technical details that we would like to avoid.

We proceed with the following proposition connecting sequential entropy in the sense of Proposition~\ref{prop_kush} to the scaling entropy of the action. 
\begin{prop}\label{prop_rel}
Consider for every integer~$n$ a family $\{P_n^l\}^{l = 1, \ldots, k_n}$ of finite disjoint subsets of a~countable group~$G$ such that $F_n = \cup_{l=1}^{k_n} P_n^l$ is a F\o lner sequence. Assume that for some p.m.p. action~$\alpha$ of~$G$
\begin{equation}\label{rel_kush}
\supl_\xi \limsup\limits_n \min\limits_{l = 1, \ldots, k_n} \frac{1}{\abs{P_n^l}} H\lr{\bigvee_{g \in P_n^l} g^{-1}\xi} > 0.
\end{equation}
Then for any $\Phi \in \mathcal{H}(\alpha, \lambda)$, where $\lambda = \{F_n\}$,
\begin{equation}
\Phi(n, \eps) \not \prec \frac{\abs{F_n}}{k_n}
\end{equation}
for any sufficiently small $\eps > 0$.
\end{prop}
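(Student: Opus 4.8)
The plan is to relate the $\eps$-entropy of the averaged semimetric $G^n_{av}\rho$ to the sequential entropy of the action along the pieces $P_n^l$. Fix a generating semimetric $\rho \in \Adm$; by the Zatitskiy invariance theorem it suffices to bound $\Phi_\rho$. The starting observation is that if $\rho \ge \rho_\xi$ for a cut semimetric $\rho_\xi$ (equivalently, $\rho$ dominates the indicator that $x,y$ lie in different cells of $\xi$ up to scaling), then averaging is monotone: $G^n_{av}\rho \ge c\, G^n_{av}\rho_\xi$ for a constant $c>0$. So the first step is to pass from the abstract $\rho$ to cut semimetrics of finite partitions, using that $\rho$ is generating to dominate (a multiple of) $\rho_\xi$ for any fixed $\xi$ from a dense family, which is exactly the setup in which~\eqref{rel_kush} gives a partition $\xi$ with positive sequential entropy along the $P_n^l$.

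The second and main step is the estimate $\mbbH_\eps(X,\mu, G^n_{av}\rho_\xi) \gtrsim \min_l \tfrac{1}{|P_n^l|} H(\bigvee_{g\in P_n^l} g^{-1}\xi) \cdot \tfrac{|F_n|}{k_n}$, up to $\eps$-dependent constants and lower order terms. The intuition: a point of $X$ together with a choice of $\eps$-ball in $G^n_{av}\rho_\xi$ roughly determines, for a positive fraction of the $g \in F_n$, the cell of $g^{-1}\xi$ containing the point — because $G^n_{av}\rho_\xi(x,y) = \tfrac{1}{|F_n|}|\{g\in F_n : gx, gy \text{ in different cells of }\xi\}|$, so two points in the same small ball agree on their $\xi$-itinerary for all but an $\eps$-fraction of $F_n$. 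Splitting $F_n = \sqcup_l P_n^l$, for at least one $l$ the itinerary over $P_n^l$ is nearly determined; since the number of essentially different itineraries over $P_n^l$ is $\exp(H(\bigvee_{g\in P_n^l} g^{-1}\xi))$ by Shannon, a counting/covering argument (Shannon–McMillan–Breiman or just a direct Fano-type inequality applied to the partition $\bigvee_{g\in P_n^l}g^{-1}\xi$) forces the number of $\eps$-balls to be at least $\exp(\text{const}\cdot k_n^{-1}\sum_l H(\bigvee_{P_n^l}g^{-1}\xi) )$ — or, more robustly, one argues that a single ball cannot cover a set whose itinerary-entropy over the "best" $l$ is large, giving the $\min_l$ rather than the average. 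Taking $\limsup_n$ and using~\eqref{rel_kush} then yields $\Phi_\rho(n,\eps) \not\prec |F_n|/k_n$ for small $\eps$.

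I expect the main obstacle to be the counting step in the form with $\min_l$ over a comeager-scale family rather than a fixed partition: one must be careful that the $\eps$ needed to see a given $\xi$ inside $\rho$ does not degrade as the partition varies, and that the $\limsup_n$ in~\eqref{rel_kush} — which only guarantees a good lower bound along a \emph{subsequence} of $n$ and for the \emph{optimal} $\xi$ — transfers to a statement of the form $\Phi_\rho(n,\eps)\not\prec |F_n|/k_n$. The clean way around this is: given $\delta_0 > 0$ witnessing~\eqref{rel_kush} for some finite $\xi$, choose $\eps$ small enough (depending on $\xi$ and $\delta_0$) that $\rho \ge \rho_\xi$ forces $\mbbH_\eps(X,\mu,G^n_{av}\rho) \ge \tfrac{\delta_0}{4}\cdot\tfrac{|F_n|}{k_n}$ along the subsequence realizing the $\limsup$; since $|F_n|/k_n \to \infty$ (as each $P_n^l$ escapes every finite set, $|P_n^l|\to\infty$ would not be automatic, but $|F_n|/k_n$ is the average piece size and one checks it is unbounded from the Følner/disjointness hypotheses — alternatively this is exactly what "$\not\prec$" is designed to tolerate), this prevents $\Phi_\rho(n,\eps) = o(|F_n|/k_n)$. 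The remaining details are the routine verification that the monotonicity $\rho$ generating $\Rightarrow \rho \gtrsim \rho_\xi$ holds with a uniform constant on the relevant piece, and bookkeeping of the $\forall\eps\,\exists\delta$ quantifiers in the definition of $\prec$.
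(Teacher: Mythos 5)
Your overall direction is the same as the paper's (reduce to the cut semimetric $\rho_\xi$ of one partition witnessing~\eqref{rel_kush}, then convert Shannon entropy over a single block $P_n^l$ into a lower bound on the $\eps$-entropy of the averaged semimetric), but the two steps that carry the real weight are wrong as stated or missing. First, the reduction from a generating $\rho$ to $\rho_\xi$ via the pointwise domination $\rho \ge c\,\rho_\xi$ is false in general: for instance, for $\rho(x,y)=|x-y|$ on $[0,1]$ and $\xi=\{[0,\tfrac12),[\tfrac12,1]\}$, points in different cells can be arbitrarily $\rho$-close, so no $c>0$ works, and "generating" does not help. What is actually needed (and what the paper uses implicitly by working with $\rho_\xi$ directly) is the maximality part of Zatitskiy's invariance theorem: $\Phi_\omega \precsim \Phi_\rho$ for \emph{every} admissible $\omega$ when $\rho$ is generating, so a lower bound for $\Phi_{\rho_\xi}$ already bounds every $\Phi\in\mathcal{H}(\alpha,\lambda)$. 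This is fixable, but it is a different, non-pointwise fact.

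Second, and more seriously, the localization to a single block is not justified. Your pigeonhole produces, for each pair $x,y$ in a small ball, an index $l=l(x,y)$ on which their itineraries nearly agree; it does not produce one $l$ valid for a whole ball, let alone for the whole covering, and the sentence "one argues that a single ball cannot cover a set whose itinerary-entropy over the best $l$ is large" merely restates the claim to be proved. The paper resolves exactly this with the Petrov--Zatitskiy convex-combination lemma (Lemma~\ref{lm_lowerbound}), which selects a single $l$ with $\mbbH_{2\sqrt{\eps}}(X,\mu,G^{P_n^l}_{av}\rho_\xi)$ dominated by the $\eps$-entropy of the average; this is the key nontrivial input and has no one-line substitute. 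Moreover, one must first discard the small blocks: such a selection applied to all of $F_n$ may return an $l$ with $|P_n^l|=o(|F_n|/k_n)$, for which the Shannon bound $H\lr{\bigvee_{g\in P_n^l}g^{-1}\xi}\ge \tfrac{c}{2}|P_n^l|$ gives nothing of order $|F_n|/k_n$; the paper restricts to $\tilde F_n$, the union of blocks with $|P_n^l|>|F_n|/(2k_n)$, at the cost of a factor $2$ in $\eps$, and your sketch ignores this entirely. Finally, the alternative "global" count over all of $F_n$ fails quantitatively: the Hamming-ball correction is of order $\eps|F_n|\log m$, which swamps $|F_n|/k_n$ as soon as $k_n\to\infty$, so the localization must precede the Shannon/Fano comparison (which, over one large block, is the paper's Lemma~\ref{lm_partitions}). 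Your worries about uniformity in $\xi$ and about the $\limsup$ are not real obstacles — a single witness $\xi$ and a subsequence of $n$ suffice for the relation $\not\prec$ — but the two gaps above are precisely the substance of the proof.
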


\begin{proof}
Consider a finite partition~$\xi$ satisfying relation~\eqref{rel_kush}, and let $c$ be the corresponding value of the left hand side. Let $\rho_\xi$ be the corresponding cut semimetric.
Let $\tilde F_n \subset F_n$ be the union of those~$P_n^l$ that satisfy 
\begin{equation}\label{eq_457569453}
    \abs{P_n^l} > \frac{|F_n|}{2k_n}.
\end{equation}
Let $L_n$ be the set of corresponding indices $l$-s. One may easily see that $|{\tilde F_n}| \ge \frac{1}{2}\abs{F_n}$. Hence, $G_{av}^{\tilde F_n}\rho_\xi(x,y) \le 2 G_{av}^{F_n}\rho_\xi(x,y)$ for any $x, y \in X$. Therefore,
\begin{equation}
\mbbH_\eps(X, \mu, G_{av}^{F_n}\rho_\xi) \ge \mbbH_{2\eps}(X, \mu, G_{av}^{\tilde F_n}\rho_\xi).
\end{equation}
Then we use the following lemma, which is proved in~\cite{PZ}, to estimate $\mbbH_{2\eps}(X, \mu, G_{av}^{\tilde F_n}\rho_\xi)$ from below.
\begin{lm}\label{lm_lowerbound}
        Let $\rho_1, \ldots, \rho_k$ be admissible semimetrics on~$\lr{X,\mu}$ such that $\rho_i(x,y) \le 1$ for all $i \le k,\ x, y \in X$. Let $\tilde\rho = \frac{1}{k} (\rho_1+ \ldots+ \rho_k)$. Then there exists some $m \le k$ such that
            \begin{equation}
            \mbbH_{2\sqrt{\eps}}\lr{X,\mu, \rho_m} \le 
            \mbbH_{\eps}\lr{X,\mu, \tilde \rho}.
            \end{equation}
    \end{lm}
It is easy to see that the same result holds for a convex combination $\tilde \rho = \sum_{i} \alpha_i \rho_i$, where $\alpha_i > 0$,  $\alpha_1 + \ldots+ \alpha_k =1$. In our case, we have 
\begin{equation}
G_{av}^{\tilde F_n}\rho_\xi = \suml_{l\in L_n} \frac{\abs{P_n^l}}{|{\tilde F_n}|} G_{av}^{P_n^l}\rho_\xi.
\end{equation}
Thus, there exists some $l \in L_n$ such that $\mbbH_{2\eps}(X, \mu, G_{av}^{\tilde F_n}\rho_\xi) \ge \mbbH_{2\sqrt{2\eps}}(X, \mu, G_{av}^{P_n^{l}}\rho_\xi)$. 
Suppose that~$n$ is such that
\begin{equation}
\min\limits_{l = 1, \ldots, k_n} \frac{1}{\abs{P_n^l}} H\lr{\bigvee_{g \in P_n^l} g^{-1}\xi} > \frac{c}{2}.
\end{equation}
We use the following lemma from~\cite{Z1} that connects $\eps$--entropy to the classical Shannon entropy.
\begin{lm}\label{lm_partitions}
Let $m,k \in \mbbN$ and $\{\xi_i\}_{i=1}^k$ be a family of finite measurable partitions each having no more than~$m$ cells.  Let $\xi = \bigvee_{i=1}^k\xi_i$ be the common refinement of these partitions and $\rho = \frac{1}{k} \sum_{i=1}^k\rho_{\xi_i}$ be the average of corresponding semimetrics. Then for any $\eps \in (0, \frac{1}{2})$ the following estimate holds:
        \begin{equation}
        \frac{H(\xi)}{k} \le \frac{\mbbH_\eps(X, \mu, \rho)}{k} + 2\eps \log m - \eps \log \eps - (1 - \eps)\log(1 - \eps) + \frac{1}{k}.
        \end{equation}
\end{lm}
Let $m = \abs{\xi}, \ \xi_g = g^{-1}\xi$, where $g\in P_n^l$. According to Lemma~\ref{lm_partitions}, we have
\begin{equation}
\mbbH_{2\sqrt{2\eps}}(X, \mu, G_{av}^{P_n^l}\rho_\xi)  \ge
\mbbH_{4\sqrt{\eps}}(X, \mu, G_{av}^{P_n^l}\rho_\xi)
> \abs{P_n^l}\left(\frac{c}{2} -8\sqrt{\eps} \log m - \delta(4\sqrt{\eps})\right) - 1,
\end{equation}
where $\delta(\eps) = -2\eps \log \eps - 2(1 - \eps)\log(1 - \eps)$, which tends to zero when $\eps$ goes to zero. 
Then, choosing~$\eps$ sufficiently small depending only on $c$ and $m = \abs{\xi}$, we obtain $\mbbH_{4\sqrt{\eps}}(X, \mu, G_{av}^{P_n^l}\rho_\xi) > \frac{c}{4} \abs{P_n^l}$. Since $\abs{P_n^l} > \frac{|F_n|}{2k_n}$ by assumption~\eqref{eq_457569453}, we obtain
\begin{equation}
\mbbH_\eps(X, \mu, G_{av}^{F_n}\rho_\xi) \ge \mbbH_{4\sqrt{\eps}}(X, \mu, G_{av}^{P_n^l}\rho_\xi) > \frac{c}{4} \abs{P_n^l} > \frac{c}{8} \cdot \frac{|F_n|}{k_n}.
\end{equation}
Thus, at least along some subsequence $\mbbH_\eps(X, \mu, G_{av}^{F_n}\rho_\xi) \gtrsim \frac{|F_n|}{k_n}$, and that completes the proof.
\end{proof}
%\subsection{Proof of Theorem~\ref{} and non--existence of a universal zero-entropy system}

\begin{proof}[Proof of Theorem~\ref{thm_upbound}]
It suffices to construct a family $\{P_n^l\}_{n = 1, \ldots, \infty}^{l = 1, \ldots, k_n}$ of finite subsets of~$G$ satisfying assumptions of Proposition~\ref{prop_kush} and such that $\frac{\abs{F_n}}{k_n} \succ \phi(n)$. Then the desired result follows from Proposition~\ref{prop_rel}. 

Let~$K$ be a finite subset of~$G$. Consider a locally finite graph $\Gamma_K = (G, E_K)$, where $(g, h)$ belongs to $E_K$ if and only if either $gh^{-1} \in K$ or $hg^{-1} \in K$. Clearly, the degree of each vertex in~$\Gamma_K$ does not exceed~$2\abs{K}$. Therefore, there exists a proper vertex coloring of $\Gamma_K$ into $r_K = 2\abs{K} + 1$ colors, that~is, a partition of all vertices into $r_K$~parts such that any two adjacent vertices belong to different parts.  
Indeed, one may color vertices one by one; each time there is at least one color available since no more than~$2|K|$  colors are prohibited. Hence, we obtain a decomposition $G = \bigcup_{l =1}^{r_K} C_K^l$, where $C_K^l$ are mutually disjoint and $gh^{-1} \not \in K$ for any $l\le r_k$ and any $g, h \in C_K^l$.

Take a sequence of increasing finite subsets exhausting the entire group: $K_1 \subset K_2 \subset \ldots  \subset \bigcup K_i = G$. Now let~$i(n)$ be a non-decreasing sequence of positive integer parameters with $\lim i(n) = +\infty$, which we will define  later. Put
\begin{equation}
 P_n^l = F_n \cap C_{K_{i(n)}}^l, \quad l = 1, \ldots, r_{K_{i(n)}}.    
\end{equation}
Clearly, the family~$\{P_n^l\}$ satisfies the assumptions of Proposition~\ref{prop_kush}. Since by assumptions of Theorem~\ref{thm_upbound} the sequence $\frac{|{F_n}|}{\phi(n)}$ goes to infinity, we can chose a piecewise constant  sequence~$i(n)$, also tending to infinity, such that $k_n = r_{K_{i(n)}} \prec \frac{|F_n|}{\phi(n)}$. Therefore, $\frac{|F_n|}{k_n} \succ \phi(n)$ as desired. 
\end{proof}

Of course, the genericity implies existence, and we obtain the following corollary. 
\begin{cor}
    Any countable amenable group admits actions of almost complete growth with respect to any F\o lner sequence. 
\end{cor}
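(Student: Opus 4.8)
The plan is to obtain the corollary directly from Theorem~\ref{thm_upbound} by the elementary observation that a subset of a non-empty Polish space which contains a dense $G_\del$ set is itself non-empty. So let $G$ be a countable amenable group, let $\lambda=\{F_n\}$ be an arbitrary F\o lner sequence in $G$, and let $\phi(n)=o(\abs{F_n})$ be the prescribed rate of growth that an action of almost complete growth is required to outgrow. Theorem~\ref{thm_upbound} provides a dense $G_\del$ subset $\mathcal{G}\subseteq A(G,X,\mu)$ consisting of zero-entropy ergodic p.m.p.\ actions $\alpha$ whose scaling entropy $\Phi\in\mathcal{H}(\alpha,\lambda)$ satisfies $\Phi(n,\eps)\not\precsim\phi(n)$ for all sufficiently small $\eps>0$.

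Now I would simply note that $A(G,X,\mu)$ is non-empty --- it contains the trivial action --- and Polish (a closed subset of $\Aut(X,\mu)^G$, as recorded above), so that $\mathcal{G}$, being a dense $G_\del$ in it, is non-empty. Any $\alpha\in\mathcal{G}$ is an action of almost complete growth with respect to $\lambda$, and since $G$, $\lambda$, and $\phi$ were arbitrary, this is exactly the assertion of the corollary. If one in fact wants a single action that outgrows every member of a prescribed \emph{countable} family $\{\phi_j\}_j$ of rates with $\phi_j(n)=o(\abs{F_n})$, one intersects the corresponding sets $\mathcal{G}_j$ from Theorem~\ref{thm_upbound}: by the Baire category theorem the countable intersection $\bigcap_j\mathcal{G}_j$ is again a dense $G_\del$ subset of $A(G,X,\mu)$, hence non-empty, and any action it contains works for all $j$ at once.

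There is no genuine obstacle beyond Theorem~\ref{thm_upbound}: the corollary merely records that the generic behaviour established there is in particular realized, so the argument is nothing more than ``genericity implies existence'', as already signalled in the sentence preceding the statement. The one point that should not be glossed over is that one must stay inside the class of zero-entropy (and ergodic) actions; this is automatic here, since Theorem~\ref{thm_upbound} already delivers the dense $G_\del$ set inside that class rather than merely inside $A(G,X,\mu)$.
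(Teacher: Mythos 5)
Your argument is correct and is exactly the paper's: the corollary is recorded there with the single remark that ``genericity implies existence,'' i.e.\ the dense $G_\del$ set from Theorem~\ref{thm_upbound} is non-empty in the (non-empty Polish, hence Baire) space $A(G,X,\mu)$, so for each $\phi(n)=o(\abs{F_n})$ some zero-entropy ergodic action with $\Phi\not\precsim\phi$ exists. Your extra observation about intersecting countably many such sets is a harmless strengthening, not needed for the statement.
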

To finish the proof of Theorem~\ref{thm_univ}, it only remains to recall the following theorem proved in~\cite{Vep2}.
\begin{thm}\label{thm_univ1}
     Suppose that an amenable group~$G$ admits {ergodic} actions of almost complete growth for some F\o lner equipment. Then $G$~does not have a universal zero-entropy system.
\end{thm}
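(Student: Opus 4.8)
The plan is to argue by contradiction. Suppose $G$ admits a universal system $(X,G)$ for the class of all ergodic zero-entropy p.m.p. $G$-systems, where $X$ is a compact metric space with metric $d$, and fix a F\o lner equipment $\lambda = \{F_n\}$ for which $G$ has an ergodic action $\alpha$ of almost complete growth (the hypothesis of the theorem; recall that such an action has zero measure entropy and, by definition, its scaling entropy is not dominated by any sequence $\phi$ with $\phi(n) = o(\abs{F_n})$). I will show that, on the contrary, the scaling entropy of every $G$-invariant measure on $X$ is dominated by some $o(\abs{F_n})$ sequence. First I would recall that, by the variational principle for actions of countable amenable groups, universality forces $h_{\mathrm{top}}(X,G) = 0$: every ergodic — hence, by affinity of measure entropy, every — invariant measure gives a zero-entropy system, so the supremum defining topological entropy vanishes. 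Consequently the Bowen metrics $d_{F_n}(x,y) = \max_{g\in F_n} d(gx, gy)$ have subexponential covers: for every $\eps > 0$, the minimal number $s_n(\eps)$ of sets of $d_{F_n}$-diameter less than $\eps$ needed to cover $X$ satisfies $\log_2 s_n(\eps) = o(\abs{F_n})$ as $n \to \infty$, using the Ornstein--Weiss machinery (zero topological entropy is independent of the F\o lner sequence and equivalent to this bound).

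Next I fix an arbitrary $G$-invariant Borel probability measure $\mu$ on $X$ and claim $\mathcal{H}(X, \mu, G, \lambda) \precsim \psi$ for some $\psi$ with $\psi(n) = o(\abs{F_n})$. The metric $d$ is bounded, hence lies in $\Adm(X, \mu)$, and it separates all points of $X$, so it is a generating semimetric for $(X, \mu, G)$; by Zatitskiy's invariance theorem $\mathcal{H}(X, \mu, G, \lambda) = \wpart{\Phi_d}$. Since $G^{F_n}_{av} d \le d_{F_n}$ pointwise, any cover of $X$ by sets of $d_{F_n}$-diameter $< \eps$ is also a cover by sets of $G^{F_n}_{av} d$-diameter $< \eps$, whence $\Phi_d(n, \eps) = \mbbH_\eps(X, \mu, G^{F_n}_{av} d) \le \log_2 s_n(\eps) = o(\abs{F_n})$ for each fixed $\eps$. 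A routine diagonalization over $\eps = 1/k$, using that $\Phi_d(n, \eps)$ is nonincreasing in $\eps$, then yields a single sequence $\psi$ with $\psi(n) = o(\abs{F_n})$ and $\Phi_d(n, \eps) \precsim \psi(n)$ for all $\eps > 0$; that is, $\mathcal{H}(X, \mu, G, \lambda) \precsim \psi$.

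Finally, since $\alpha$ is an ergodic zero-entropy system, universality of $(X,G)$ provides a (necessarily ergodic) invariant measure $\mu_0$ on $X$ with $(X, \mu_0, G)$ measure-theoretically isomorphic to $\alpha$. Scaling entropy is an isomorphism invariant, so $\mathcal{H}(\alpha, \lambda) = \mathcal{H}(X, \mu_0, G, \lambda) \precsim \psi$ with $\psi(n) = o(\abs{F_n})$, contradicting the defining property of an action of almost complete growth. Hence $G$ has no universal zero-entropy system.

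I expect the main obstacle to be the second step: making the passage from $h_{\mathrm{top}}(X,G) = 0$ to the uniform-in-$n$ estimate $\Phi_d(n, \eps) = o(\abs{F_n})$ fully rigorous. This requires care with the definition of topological entropy for amenable group actions (independence of the F\o lner sequence and the equivalence between spanning/separating numbers and covers by sets of small diameter), together with the verification that the ambient metric is an admissible generating semimetric for every invariant measure so that Zatitskiy's theorem applies. The diagonalization producing a single $\psi$ is elementary but should be spelled out, and the remaining parts are bookkeeping with the isomorphism invariance of scaling entropy.
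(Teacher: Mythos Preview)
The paper does not give its own proof of this theorem; it is quoted from~\cite{Vep2}, and the paper only hints that the argument rests on ``deeper connections [of scaling entropy] to the topological entropy''. Your outline is exactly that argument---variational principle $\Rightarrow$ $h_{\mathrm{top}}(X,G)=0$ $\Rightarrow$ a uniform sub-$|F_n|$ bound on $\mbbH_\eps$ of every invariant measure via the Bowen metric---and it is essentially correct. There is no alternative proof in the present paper to compare with.

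One quantifier needs repair. The hypothesis ``$G$ admits ergodic actions of almost complete growth'' should be read as: for every $\phi$ with $\phi(n)=o(|F_n|)$ there exists an ergodic zero-entropy action~$\alpha_\phi$ with $\mathcal{H}(\alpha_\phi,\lambda)\not\precsim\phi$. It cannot mean a single zero-entropy~$\alpha$ beating all such~$\phi$, because zero Kolmogorov--Sinai entropy forces $\Phi_\rho(n,\eps)=o(|F_n|)$ for every~$\eps$, and then your own diagonalization manufactures some $\psi=o(|F_n|)$ with $\mathcal{H}(\alpha,\lambda)\precsim\psi$. So your opening sentence (``fix\ldots an ergodic action~$\alpha$ of almost complete growth\ldots its scaling entropy is not dominated by any sequence~$\phi$ with $\phi(n)=o(|F_n|)$'') cannot hold for a zero-entropy~$\alpha$, while a positive-entropy~$\alpha$ is useless in the final step.

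The fix is just to reorder. Run the topological step first, and perform the diagonalization on the $\mu$-free quantities $\log_2 s_n(1/k)$ rather than on $\Phi_d$; this yields a single $\psi$ with $\psi(n)=o(|F_n|)$ and $\Phi_d(n,\eps)\le\log_2 s_n(\eps)\precsim\psi(n)$ for every~$\eps$ and for every invariant measure~$\mu$ simultaneously. Only then invoke the hypothesis to pick an ergodic zero-entropy~$\alpha$ with $\mathcal{H}(\alpha,\lambda)\not\precsim\psi$, realize it as $(X,\mu_0,G)$ by universality, and obtain the contradiction exactly as you wrote. The remaining technical points you flag (admissibility and generating property of~$d$, independence of topological entropy from the F{\o}lner sequence, the pointwise inequality $G^{F_n}_{av}d\le d_{F_n}$) are all routine.
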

As a consequence we obtain that there does not exist a universal zero entropy system for any countable amenable group; that~is, the Weiss' question is solved in full generality. 

\section{Generic lower bounds and scaling entropy growth gap}
Let us recall that a unitary representation of a discrete group is called \emph{compact} if every vector has a precompact orbit. A p.m.p. action is called compact if the corresponding Koopman representation is compact. It is shown in~\cite{VPZ} that  for the group~$\mbbZ$ this property is equivalent to the \emph{boundedness} of the scaling entropy. In fact, the same proof works for the case of an amenable group with F\o lner equipment. One may see~\cite{YZZ}, for instance.  

\subsection{Absence of a generic lower bound for residually finite groups}\label{sec_lowbound} 
Any countable residually finite amenable group admits a compact free p.m.p. action and, therefore, has an action with bounded scaling entropy, that is, the scaling entropy with the slowest growth possible. Indeed, one may consider an infinite product of finite approximations endowed with the natural product measure. The reverse implication is not true in general: the group of all dyadic rotations of a unit circle, for example, is not residually finite and, nevertheless, has a compact free action. However, the converse implication is true for finitely generated groups.

\begin{claim}\label{claim_compact}
A finitely generated group admits a compact free action if and only if it is residually finite.
\end{claim}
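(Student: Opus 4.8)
The plan is to prove both implications separately; the forward direction (compact free action $\Rightarrow$ residually finite) is the substantive one, while the converse is already established in the discussion preceding the claim. For the converse, recall that a residually finite group embeds into the inverse limit of its finite quotients, and the shift-type action on the resulting profinite completion (with Haar measure) is compact and free; this was sketched above, so I would only cite it.

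For the forward direction, suppose $G = \langle S \rangle$ with $S$ finite acts freely on $(X,\mu)$ with a compact Koopman representation. The key idea is that compactness forces the action to be an inverse limit of actions on \emph{finite} probability spaces: decompose $L^2(X,\mu)$ into finite-dimensional $G$-invariant subspaces (possible since a compact representation decomposes as a direct sum of finite-dimensional subrepresentations, and $L^2$ is separable), and use these to build an increasing sequence of finite $G$-invariant sub-$\sigma$-algebras $\mathcal{B}_1 \subset \mathcal{B}_2 \subset \cdots$ whose union is dense in the Borel $\sigma$-algebra of $X$. Each $\mathcal{B}_k$ is a finite partition permuted by $G$, hence gives a homomorphism $\pi_k \colon G \to \operatorname{Sym}(\mathcal{B}_k)$ to a finite symmetric group; write $N_k = \ker \pi_k$, a finite-index normal subgroup. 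To conclude residual finiteness it then suffices to show $\bigcap_k N_k = \{e\}$.

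The heart of the argument is this last step, and it is where freeness enters crucially. Fix $g \in G$, $g \neq e$. By freeness, $gx \neq x$ for $\mu$-a.e.\ $x$, so there is a Borel set $A$ with $\mu(A) > 0$ and $gA \cap A = \emptyset$ (take $A$ inside a set separating $x$ from $gx$ on a positive-measure piece). Since $\bigcup_k \mathcal{B}_k$ generates the $\sigma$-algebra mod $\mu$, for large $k$ there is a $\mathcal{B}_k$-measurable set $A'$ with $\mu(A \triangle A')$ small enough that $\mu(gA' \cap A') < \mu(A')$; in particular $gA' \neq A'$, so $g$ moves some atom of $\mathcal{B}_k$, i.e.\ $g \notin N_k$. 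Hence $\bigcap_k N_k = \{e\}$ and $G$ is residually finite. The main obstacle I anticipate is making the ``compact representation $\Rightarrow$ inverse limit of finite systems'' step fully rigorous: one must pass from finite-dimensional invariant subspaces of $L^2$ to \emph{finite} invariant sub-$\sigma$-algebras, which requires knowing the action has a generating sequence of such algebras. A clean way is to invoke the standard structure theorem that a p.m.p.\ action with compact (discrete spectrum) Koopman representation is measurably isomorphic to an isometric action on a compact metric group (a Halmos--von Neumann / Mackey-type result, valid for general countable groups), and then note that freeness of such an isometric action forces the acting group to be residually finite via the profinite structure of its closure. I would present both routes but lean on the latter for brevity.
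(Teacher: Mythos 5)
Your forward-direction argument has a genuine gap, in fact two. The main route breaks at the step ``pass from finite-dimensional invariant subspaces of $L^2$ to finite $G$-invariant sub-$\sigma$-algebras'': compactness of the Koopman representation does \emph{not} make the action an inverse limit of actions on finite probability spaces. Already the irrational rotation (a free compact action of $\mathbb{Z}$) has no nontrivial finite invariant partition at all: by ergodicity a finite invariant partition would produce a finite cyclic factor, i.e.\ a root of unity in the point spectrum, which is absent. So the subgroups $N_k = \ker \pi_k$ you want to intersect simply do not exist in general, and the whole ``permutation of atoms'' mechanism collapses. Compact actions are inverse limits of isometric actions on homogeneous spaces of \emph{compact} groups, not of finite ones.

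Your fallback route is closer, but its last step is also wrong as stated: after passing to the isometric model, the closure of (the image of) $G$ is a compact group, not a profinite one --- for the irrational rotation it is the circle $\mathbb{T}$ --- so there is no ``profinite structure of its closure'' to invoke, and residual finiteness does not follow from it. The missing ingredient, which is exactly how the paper argues, is linearity: the compact Koopman representation decomposes as a direct sum $\bigoplus_i \tau_i$ of finite-dimensional representations; each image $\tau_i(G)$ is a finitely generated subgroup of $GL_{n_i}(\mathbb{C})$ and hence residually finite by Malcev's theorem; and freeness guarantees that for every $g \neq e$ some $\tau_i(g)$ is nontrivial, so composing $G \to \tau_i(G)$ with a finite quotient of $\tau_i(G)$ separating $\tau_i(g)$ from the identity exhibits $g$ in a finite quotient of $G$. (Equivalently, in your second route: $G$ embeds into a compact group, finite-dimensional unitary representations separate its points by Peter--Weyl, and Malcev finishes.) Note also that finite generation is used precisely in Malcev's theorem, whereas your sketch never uses it. The converse direction, as you say, is the product-of-finite-quotients construction already given in the paper and needs no new argument.
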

\begin{proof}

%Now let $G \acts{} (X,\mu)$ be a compact free p.m.p action of a finitely generated group~$G$.  Consider the corresponding unitary representation ${\pi}$ on $ L^2(X,\mu)$ and let~$v$ be a vector which is not fixed by any element of~$G$. By the assumption, the set $C = \overline{\pi(G)v}$ is a compact metric space on which $G$~acts freely by isometries. Consider $L = span(\pi(G)v) = span(C)$ and the group~$\Gamma$ of unitary operators on~$L$ preserving~$C$. 
%The group $\Gamma$ endowed with the topology induced from $Iso(C)$ is a compact group [checking needed]. Note that $G$ naturally embeds in $\Gamma$. The group~$\Gamma$ has a unitary representation on~$L$ which splits into orthogonal direct sum of finite-dimensional representations $\{ \tau_i\}_{i \in I}$ by Peter--Weyl theorem [or theorem 2.24 Kerr Li]. 
%The restriction of~$\tau_i$ to~$G$ gives a homomorphism $\left. \tau_i \right|_G$ from~$G$ to $GL_{n_i}(\mathbb{C})$ for some finite~$n_i$. The full image of~$\left. \tau_i \right|_G$ is a finitely generated subgroup in $GL_{n_i}(\mathbb{C})$. Hence, $\left. \tau_i \right|_G (G)$ is residually finite due to Maltcev's theorem. 
Let~$\alpha$ be a compact p.m.p. action of a group~$G$ and~$\pi$ its Koopman representation. Any compact action of a discrete group decomposes into direct sum of finite-dimensional representations (see, e.\,g.,~\cite{KL}). Therefore, $\pi = \bigoplus \tau_i$ and $\dim \tau_i = n_i < \infty$. The full image of~$\tau_i$ is a finitely generated subgroup in $GL_{n_i}(\mathbb{C})$. Hence, $ \tau_i  (G)$ is residually finite due to Maltcev's theorem.    
Since the action~$\alpha$ is free, the group~$G$ is residually finite as well.
\end{proof}

\begin{thm}\label{thm_lowbound}
Let~$G \acts{\alpha} (X, \mu)$ be a free ergodic p.m.p. action of an amenable group~$G$ and $\lambda = \{F_n\}$ be a F\o lner sequence in~$G$. Let~$\phi(n)$ be a non-negative function satisfying $\phi\succ \mathcal{H}(\alpha, \lambda)$. Then the set of all free p.m.p. actions~$\beta$ of~$G$ with $\mathcal{H}(\beta, \lambda) \succ \phi$ is meager. 
\end{thm}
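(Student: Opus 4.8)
The plan is to combine Theorem~\ref{thm_upbound} (in its sequential-entropy incarnation, Proposition~\ref{prop_kush}) with a conjugation/perturbation argument, using the fact that the weak topology makes conjugation act nicely and that a free ergodic action of an amenable group has dense conjugacy class in $A(G,X,\mu)$ by~\cite{ForW}. More precisely: I want to show that the set $\{\beta : \mathcal{H}(\beta,\lambda)\succ\phi\}$ is contained in the complement of a dense $G_\delta$. The natural dense $G_\delta$ to aim for is (a refinement of) the comeager set $W$ produced in Proposition~\ref{prop_kush}: actions with infinite sequential entropy along a suitably chosen family $\{P_n^l\}$. The point is that $\phi\succ\mathcal{H}(\alpha,\lambda)$ is a genuine constraint only if $\phi$ grows slower than $|F_n|$ along no subsequence faster than $\mathcal{H}(\alpha,\lambda)$ does — so $\phi(n)=o(|F_n|)$ cannot be assumed, and in fact the content is that \emph{any} $\phi$ dominated (in the $\succ$ sense) by the scaling entropy of a single free ergodic action is automatically \emph{not} a generic lower bound.

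The first step is to understand the hypothesis $\phi\succ\mathcal{H}(\alpha,\lambda)$ quantitatively: unwinding the definition of $\succ$ from~\eqref{eq_po}, there is $\delta_0>0$ such that for every $\eps>0$ we have $\Phi_{\rho}(n,\eps)=o(\phi(n))$ for a generating $\rho\in\Adm$. Hence along \emph{every} subsequence $\phi$ strictly dominates the averaged-metric $\eps$-entropy of $\alpha$. The second step is to build, via the coloring construction in the proof of Theorem~\ref{thm_upbound}, a partition-into-blocks family $F_n=\bigcup_{l=1}^{k_n}P_n^l$ with $gh^{-1}\notin K$ on each block for eventually-every $n$, but now with $k_n$ chosen so that $|F_n|/k_n$ does \emph{not} dominate $\phi$ — in fact so that $|F_n|/k_n \precsim$ (the $\eps$-entropy growth of $\alpha$), which is possible precisely because $\phi\succ\mathcal{H}(\alpha,\lambda)$ and $\mathcal{H}(\alpha,\lambda)\precsim |F_n|$ always (the $\eps$-entropy of any averaged semimetric is at most $\log_2$ of a covering number that grows at most like $\exp(O(|F_n|))$... — more simply, $\Phi_{\rho_\xi}(n,\eps)\le|F_n|\log_2|\xi|$). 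Wait — this is backwards; let me restate the intended mechanism.

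The correct mechanism: the comeager set to avoid is one on which scaling entropy is forced to be \emph{large}; Theorem~\ref{thm_lowbound} asserts instead that a specific lower bound $\phi$ fails generically. So the argument must be a \emph{non}-genericity statement: I must produce a dense $G_\delta$ on which $\mathcal{H}(\cdot,\lambda)\not\succ\phi$. The natural candidate is the closure of the conjugacy class of $\alpha$ intersected with the ergodic free actions — but that is not $G_\delta$. Instead, I would argue as follows. Fix the generating cut semimetric $\rho_\xi$ for a fixed finite partition $\xi$; for each $\eps>0$ and each threshold $c>0$ the set $\{\beta : \Phi^{\beta}_{\rho_\xi}(n,\eps)\le c\,\phi(n)\text{ for infinitely many }n\}$ is $G_\delta$ (it is a countable intersection over $N$ of open "there exists $n\ge N$" conditions, each open because $\beta\mapsto \mathbb{H}_\eps(X,\mu,\beta^n_{av}\rho_\xi)$ is upper semicontinuous in the weak topology after the standard approximation). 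The conjugates $T\alpha T^{-1}$ all lie in this set (scaling entropy is a conjugacy invariant, and for $\alpha$ itself $\Phi^{\alpha}_{\rho_\xi}(n,\eps)=o(\phi(n))$), and by~\cite{ForW} these conjugates are dense. Intersecting over a sequence $\eps_j\downarrow 0$, $c_j\downarrow 0$ suitably, and intersecting with the comeager set of free actions, yields a dense $G_\delta$ of free actions $\beta$ with $\Phi^{\beta}_{\rho_\xi}\not\succ\phi$, hence $\mathcal{H}(\beta,\lambda)\not\succ\phi$ by the invariance theorem of Zatitskiy. That shows $\{\beta:\mathcal{H}(\beta,\lambda)\succ\phi\}$ is disjoint from a dense $G_\delta$, i.e.\ meager.

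The main obstacle is the upper semicontinuity of $\beta\mapsto\mathbb{H}_\eps(X,\mu,\beta^n_{av}\rho_\xi)$: $\mathbb{H}_\eps$ is an infimum over admissible partitions of $X$, and one must check that a partition nearly realizing the $\eps$-covering for $\alpha$, when the cells are slightly perturbed in measure (as happens when $\beta\to\alpha$ weakly and $\beta(g)a\triangle\alpha(g)a$ is small), still gives diameter $<\eps+o(1)$ on all but an $\eps+o(1)$-measure exceptional set — so one actually gets $\limsup_{\beta\to\alpha}\mathbb{H}_{\eps+o(1)}(\beta^n_{av}\rho_\xi)\le \mathbb{H}_\eps(\alpha^n_{av}\rho_\xi)$, which after relabelling $\eps$ is exactly the semicontinuity needed for the $G_\delta$-structure, and is harmless since scaling entropy is only defined up to the $\asymp$ from~\eqref{eq_po} which already absorbs the $\eps$-slack. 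A secondary point to be careful about is that a single fixed finite partition $\xi$ and its cut metric $\rho_\xi$ need not be generating for $\beta$; but for the direction $\mathcal{H}(\beta,\lambda)\not\succ\phi$ it is enough that $\Phi^\beta_{\rho_\xi}\le\Phi^\beta_{\rho}$ for a generating $\rho\succeq\rho_\xi$ — indeed a generic $\beta$ is ergodic and free, $\rho_\xi$ extends to a generating semimetric by adding countably many cut metrics of a generating sequence of partitions with rapidly decaying weights, and $\mathbb{H}_\eps$ is monotone under this, so the bound on $\rho_\xi$ transfers. I would handle this by taking $\rho$ itself (a fixed generating admissible semimetric, e.g.\ a weighted sum of cut metrics of a fixed generating partition sequence) from the start rather than a single $\rho_\xi$, and running the whole argument with $\Phi_\rho$.
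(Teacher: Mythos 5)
Your final argument coincides in essence with the paper's proof: fix a generating semimetric $\rho=\sum_i 2^{-i}\rho_{\xi_i}$ built from a dense sequence of partitions, use the Foreman--Weiss density of the conjugacy class of $\alpha$ (along which $\mbbH_\eps(\cdot^{\,n}_{av}\rho)=o(\phi(n))$), surround these conjugates by weak neighbourhoods on which the averaged $\eps$-entropy at a suitable time stays below $\frac1k\phi$, and intersect to get a dense $G_\delta$ of actions with $\mathcal{H}(\beta,\lambda)\not\succ\phi$. The semicontinuity you single out as the main obstacle is exactly what the paper's Lemma~\ref{lm_mnorm} supplies (small $m$-norm perturbation gives $\mbbH_\eps(\rho_1)<\mbbH_{\eps/4}(\rho_2)$, applied after truncating $\rho$ to a finite sum), i.e.\ a multiplicative rather than additive adjustment of $\eps$, which is harmless for precisely the relabelling reason you give.
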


Applying Theorem~\ref{thm_lowbound} to {a compact action of a} residually finite amenable group, we obtain Theorem~\ref{thm_resfin}.
%\begin{cor}
%For any residually finite amenable group~$G$ with a F\o lner sequence~$\lambda$ and any~$\phi(n)$ with $\lim_n \phi(n) = \infty$, the set of $G$--actions satisfying $\mathcal{H}(\alpha, \lambda) \succ \phi$ is meager.
%\end{cor}

\begin{proof}
Consider a dense sequence of finite measurable partitions~$\{\xi_i\}_{i = 1}^{\infty}$ of $(X,\mu)$ and a measurable metric~$\rho = \sum_{i=1}^\infty \frac{1}{2^i}\rho_{\xi_i}$. Let~$\{\alpha_q\}$ be a countable dense family of $G$--actions from the conjugacy class of~$\alpha$. Also, fix a monotone sequence~$\{\eps_r\}$ of positive numbers tending to zero. 
For any~$q$ and~$k$ there exists a~$j_{k,q}$ such that 
\begin{equation}
     \mbbH_{\frac{\eps_k}{4}}(X, \mu, (\alpha_q)_{av}^{j_{k,q}} \rho) < \frac{1}{k}\phi(j_{k,q}).
\end{equation}
Consider a neighbourhood~$U_{k,q}$ of~$\alpha_q$ such that for every~$\beta \in U_{k,q}$ the following holds true 
\begin{equation}\label{eq_938687634}
     \mbbH_{\eps_k}(X, \mu, \beta_{av}^{j_{k,q}} \rho) < \frac{1}{k}\phi(j_{k,q}).
\end{equation}
Such~$U_{k,q}$ does indeed exist due to the following lemma from~\cite{Z1}.
\begin{lm}\label{lm_mnorm}
Assume that $\norm{\rho_1 - \rho_2}_m < \eps^2/32$, where $\rho_1, \rho_2 \in \Adm(X,\mu)$ and $\eps > 0$. Then the inequality $\mbbH_\eps(X,\mu, \rho_1) < \mbbH_{\eps/4}(X,\mu, \rho_2)$ holds true.
\end{lm}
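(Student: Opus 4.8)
The plan is to unpack the $m$--norm, reduce the claimed inequality to a covering construction, and carry that out by producing a single good $\sigma$--ball that absorbs the perturbation. First I would use the hypothesis $\norm{\rho_1-\rho_2}_m<\eps^2/32$ to choose, within the defining infimum, a measurable semimetric $\sigma$ with $\sigma(x,y)\ge\abs{\rho_1(x,y)-\rho_2(x,y)}$ $\mu^2$--a.s. and $\norm{\sigma}_{L^1(X^2,\mu^2)}<\eps^2/32$; in particular $\rho_1\le\rho_2+\sigma$ almost surely. If $\mbbH_{\eps/4}(X,\mu,\rho_2)=+\infty$ there is nothing to prove, since $\rho_1$ is admissible and hence $\mbbH_\eps(X,\mu,\rho_1)<+\infty$. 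Otherwise I fix an optimal decomposition $X=X_0\cup X_1\cup\dots\cup X_k$ into disjoint measurable sets realizing $\rho_2$ at scale $\eps/4$, so that $\mu(X_0)<\eps/4$, $\diam_{\rho_2}(X_i)<\eps/4$ for $i>0$, and $\log_2 k=\mbbH_{\eps/4}(X,\mu,\rho_2)$. The goal then becomes: build a decomposition witnessing $\mbbH_\eps(X,\mu,\rho_1)$ that uses no more than $k$ non--exceptional pieces.

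The crucial step, and the one I expect to be the main obstacle, is to control $\sigma$ uniformly across all pieces at once. A naive approach --- picking a $\sigma$--center inside each $X_i$ separately and discarding the points far from it --- fails, because the measure one must discard scales like $\frac{1}{\mu(X_i)}\int_{X_i^2}\sigma$, which is uncontrollable for pieces of small measure and, after a Cauchy--Schwarz estimate, only sums to a bound carrying a spurious factor $\sqrt{k}$. Instead I would produce one global ball. Writing $f(x)=\int_X\sigma(x,y)\,d\mu(y)$, Fubini gives $\int_X f\,d\mu=\norm{\sigma}_{L^1}<\eps^2/32$, so by Markov's inequality the set of \emph{good centers} $\{x:f(x)<\eps^2/16\}$ has measure $>1/2$ and is in particular nonempty; fix such a point $c$. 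A second application of Markov's inequality to $\sigma(c,\cdot)$ yields $\mu(\{y:\sigma(c,y)\ge\eps/8\})\le\frac{8}{\eps}f(c)<\eps/2$. Hence the ball $Z=\{y:\sigma(c,y)<\eps/8\}$ satisfies $\mu(X\setminus Z)<\eps/2$, and by the triangle inequality for $\sigma$ any two points $x,y\in Z$ obey $\sigma(x,y)\le\sigma(x,c)+\sigma(c,y)<\eps/4$.

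Finally I would assemble the decomposition for $\rho_1$ by intersecting with $Z$: put $Y_i=X_i\cap Z$ for $i=1,\dots,k$ and collect everything else into the exceptional set $Y_0=X_0\cup(X\setminus Z)$. Then $\mu(Y_0)\le\mu(X_0)+\mu(X\setminus Z)<\eps/4+\eps/2=3\eps/4<\eps$, while for $x,y\in Y_i$ the estimates combine to $\rho_1(x,y)\le\rho_2(x,y)+\sigma(x,y)<\eps/4+\eps/4=\eps/2<\eps$, so $\diam_{\rho_1}(Y_i)<\eps$. This is a valid decomposition for $\rho_1$ at scale $\eps$ with at most $k$ non--exceptional pieces, whence $\mbbH_\eps(X,\mu,\rho_1)\le\log_2 k=\mbbH_{\eps/4}(X,\mu,\rho_2)$. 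The strict inequality is then obtained by exploiting the remaining slack: since $\mu(Y_0)<3\eps/4$ leaves room $\eps/4$ in the exceptional budget, whenever some piece satisfies $\mu(Y_i)<\eps/4$ it may be absorbed into $Y_0$, dropping the count to $k-1$ and giving $\mbbH_\eps(\rho_1)<\mbbH_{\eps/4}(\rho_2)$; the genuinely degenerate configurations (e.g.\ $\rho_1=\rho_2$ with $\mbbH_{\eps/4}(\rho_2)=0$) do not arise in the intended application, where it is in fact the non--strict bound $\mbbH_\eps(\rho_1)\le\mbbH_{\eps/4}(\rho_2)$ that is chained into the estimate \eqref{eq_938687634}.
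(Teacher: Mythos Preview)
The paper does not itself prove Lemma~\ref{lm_mnorm}; it merely quotes the statement from~\cite{Z1} and uses it as a black box. So there is no in--paper proof to compare against, and your argument must be judged on its own merits.

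Your argument is correct and is essentially the standard one: dominate $|\rho_1-\rho_2|$ by a semimetric~$\sigma$ of small $L^1$ norm, start from an optimal $(\eps/4)$--decomposition for $\rho_2$, and intersect each piece with a single $\sigma$--ball of diameter $<\eps/4$ found via two applications of Markov's inequality. The bookkeeping $\eps/4+\eps/2<\eps$ for the exceptional set and $\eps/4+\eps/4<\eps$ for diameters is clean. The point you flag about the naive per--piece approach failing (because small pieces give no useful Markov bound) is exactly the reason one needs a single global set $Z$; your diagnosis there is spot on.

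You are also right that the \emph{strict} inequality $\mbbH_\eps(\rho_1)<\mbbH_{\eps/4}(\rho_2)$ is not literally provable from the hypotheses: if $\mbbH_{\eps/4}(\rho_2)=0$ then $\mbbH_\eps(\rho_1)\ge 0$ forces equality, regardless of any slack in the exceptional budget. Your suggested fix---absorbing a piece with $\mu(Y_i)<\eps/4$ into $Y_0$---does not cover all cases (nothing prevents every $Y_i$ from having measure $\ge\eps/4$ when $k$ is small). So the honest statement your argument establishes is $\mbbH_\eps(\rho_1)\le\mbbH_{\eps/4}(\rho_2)$, and you correctly observe that this non--strict version is all that is actually used to obtain~\eqref{eq_938687634}. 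The ``$<$'' in the lemma as stated should be read as a (harmless) imprecision inherited from the source.
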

Indeed, having Lemma~\ref{lm_mnorm} in hand, we can uniformly approximate~$\rho$ by a partial sum $\sum_{i=1}^r \frac{1}{2^i}\rho_{\xi_i}$. Then the desired inequality~\eqref{eq_938687634} is achieved provided $\mu(\beta(g^{-1})C \triangle\alpha_q(g^{-1})C)$ is sufficiently small for every set~$C$ being a cell of~$\xi_i$, where $i \le r,\ g \in F_{j_{k,q}}$.

Then consider the following $G_\del$--set:
\begin{equation}
W = \bigcap\limits_k\bigcup\limits_q U_{k,q}.
\end{equation}
Consider any~$\beta\in W$ and any integer number~$r$. Then for any~$k > r$ there exists~$q_k$ such that
\begin{equation}\label{eq_240987965}
     \mbbH_{\eps_r}(X, \mu, \beta_{av}^{j_{k,q_k}} \rho) \le \mbbH_{\eps_k}(X, \mu, \beta_{av}^{j_{k,q_k}} \rho)< \frac{1}{k}\phi(j_{k,q_k}).
\end{equation}
Since~$\rho$ is an admissible metric, the function $\Phi(n, \eps) = \mbbH_{\eps}(X, \mu, \beta_{av}^n \rho)$ belongs to the scaling entropy class $\mathcal{H}(\beta, \lambda)$. Therefore, any $\beta \in W$ satisfies $\mathcal{H}(\beta, \lambda) \not\succ \phi(n)$ due to inequality~\eqref{eq_240987965}.
\end{proof}

\begin{rem}
We did not really use the F\o lner property of equipment~$\lambda$ while proving Theorem~\ref{thm_upbound} and Theorem~\ref{thm_lowbound}. The same results are also valid if we assume~$\lambda$ to be only \emph{suitable} (see~\cite{Z2}). The essential part is, however, that the group itself is amenable. It is unknown to the author if there are similar results for non-amenable groups.
\end{rem}

\subsection{Example of a group with a scaling entropy growth gap}\label{sec_egg}
In view of Section~\ref{sec_lowbound} and Theorem~\ref{thm_resfin}, one may wonder if it is always the case that the scaling entropy of a generic action grows arbitrarily  slow (along a subsequence, of course). We already know that it is true provided the group possesses a compact free action, but it  is unclear for groups without such actions. We say that a group~$G$ has \emph{a scaling entropy growth gap} with respect to equipment~$\lambda$ if there exists a function~$\phi(n)$ tending to infinity such that $\mathcal{H}(\alpha, \lambda) \succsim \phi$ for every free p.m.p. action~$\alpha$ of the group~$G$. In this section we show that there exists a group with a scaling entropy growth gap.  

 Let $G = SL(2, \overline{\mbbF}_p)$ be the group of all $2 \times 2$--matrices with determinant~$1$ over the algebraic closure of a finite field~$\mbbF_p$, where $p > 2$ is a prime number. Clearly, $G$~is countable, and it  can be presented as a union of increasing finite subgroups 
 $
 G = \bigcup_{n = 1}^\infty G_n 
 $,
 where each $G_n =  SL(2, {\mbbF_{q_n}})$ and~$\mbbF_{q_n}$ is a finite extension of $\mbbF_{q_{n-1}}$.

%We will use the following Growth Theorem by Helfgott (see~\cite{H}) applied to~$G_n$, rather its stronger version from~\cite{PS}.
We will use the following Growth Theorem, initially proved in~\cite{H} by H.~Helfgott for~$SL(2, {\mbbF}_p)$ and then generalized to the following result (see~\cite{PS}). 
\begin{thm}\label{thm_growth}
    Let~$L$ be a finite simple group of Lie type of rank~$r$ and~$A$ a generating set of~$L$. Then either $A^3 = L$ or $|A^3| > c |A|^{1+\del}$, where~$c$ and~$\del$ depend only on~$r$.
\end{thm}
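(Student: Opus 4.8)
The plan is to reconstruct Helfgott's product argument for $SL(2,\mbbF_p)$, in the form later axiomatised by Pyber--Szab\'o and by Breuillard--Green--Tao. One realises $L$ — up to a bounded central extension and a Frobenius twist — as the group of $\mbbF_q$-points of a simple algebraic group $\mathbf G$ defined over $\overline{\mbbF}_p$; throughout, every ``bounded-complexity'' quantity (degrees of the subvarieties that occur, $\dim\mathbf G$, the number of $\mbbF_q$-conjugacy classes of maximal tori, etc.) is bounded in terms of $r$ alone, and this is exactly where the constants $c$ and $\del$ of the statement come from. After replacing $A$ by $AA^{-1}\cup\{1\}$ (which costs only a constant factor) we may assume $A=A^{-1}$, $1\in A$, and $A^3\ne L$; arguing by contradiction, suppose $\abs{A^3}\le\abs A^{1+\del}$ for a small $\del=\del(r)$ to be fixed at the end. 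By the Pl\"unnecke--Ruzsa inequalities every iterated product then satisfies $\abs{A^k}\le\abs A^{1+O_k(\del)}$, so $A$ behaves like an approximate subgroup up to polynomial losses.

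First I would establish an \emph{escape from subvarieties} estimate: no proper subvariety $V\subsetneq\mathbf G$ of bounded degree can contain more than a negligible fraction of $A$. Since $\ang A=L\not\subset V$, a pivoting/pigeonhole argument yields, for each $a\in A$, many $g\in A^{O_r(1)}$ with $ga\notin V$; hence either $\abs{A\setminus V}\gg\abs A$ or $A$ sits inside a proper closed subgroup, and the latter contradicts $\ang A=L$. Consequently a positive proportion of $A$ consists of regular semisimple elements, and one can locate elements of $A$ in general position with respect to a fixed maximal torus.

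Next I would fix a regular semisimple $a\in A$ whose centraliser $T=C_{\mathbf G}(a)$ is a maximal torus, an abelian group of bounded complexity; conjugating $A$ by its own elements and intersecting (a further pivoting argument) produces a conjugate of $T$ with $\abs{A\cap T}\gtrsim\abs A^{1-O_r(\del)}$, and growth of $A$ controls growth of $A\cap T$ inside $T$. Presenting the coordinates of $T(\mbbF_q)$ via multiplicative groups of subfields of $\mbbF_q$, the sum--product theorem over finite fields (Bourgain--Katz--Tao, Bourgain--Glibichuk--Konyagin, and its higher-rank analogues) forces $A\cap T$ either to exhaust almost all of $T(\mbbF_q)$ or to satisfy $\abs{(A\cap T)^3}\ge\abs{A\cap T}^{1+\del'}$, the second alternative contradicting the non-growth of $A$. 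If instead $A$ fills a positive proportion of $T$, then — using that conjugates of a single maximal torus generate $L$ with only boundedly many conjugacy classes involved — $A$ covers a large part of enough conjugate tori to generate $L$, and a Ruzsa covering argument upgrades this to $A^{O_r(1)}=L$ and hence, after readjusting constants, to $A^3=L$; a contradiction. Choosing $\del(r)$ small enough for all the accumulated polynomial losses to close yields the stated dichotomy.

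I expect the main obstacle to be precisely this last circle of arguments: making the passage between multiplicative growth in $L$ and additive/multiplicative growth in the field $\mbbF_q$ fully quantitative and uniform in $q$, and, in rank $r>1$, replacing Helfgott's explicit $SL_2$ trace computations by the general escape-from-subvarieties and ``non-degenerate subset'' machinery — which is the technical heart of \cite{H} and \cite{PS} and the place where all the dependence of $c$ and $\del$ on $r$ is absorbed.
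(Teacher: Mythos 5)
The first thing to say is that the paper does not prove this statement at all: Theorem~\ref{thm_growth} is quoted as a black box from the literature --- Helfgott~\cite{H} for $SL(2,\mbbF_p)$ and Pyber--Szab\'o~\cite{PS} for general rank~$r$ --- and is then only \emph{used} in the proof of Theorem~\ref{thm_egg}. So the comparison is with those references, not with anything in the text, and no reader of this paper is expected to reprove the result.

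Measured against those references, your outline has the right general shape (escape from subvarieties, pivoting onto a maximal torus, a dichotomy for $A\cap T$, a covering argument), but two steps are genuine gaps rather than compressible details. First, the sum--product step: for rank $r>1$ and for arbitrary $\mbbF_q$ there is no off-the-shelf ``higher-rank analogue'' of Bourgain--Katz--Tao that turns non-growth of $A\cap T$ inside a maximal torus into a contradiction; subfields already obstruct the naive statement over $\mbbF_q$, and the actual Pyber--Szab\'o (and Breuillard--Green--Tao) proofs dispense with sum--product altogether, replacing it by uniform Larsen--Pink-type dimensional estimates of the shape $|A^{O_r(1)}\cap V|\le |A^{3}|^{\dim V/\dim \mathbf{G}+o(1)}$ for bounded-degree subvarieties $V$ (applied to tori, their conjugates, and conjugacy classes); establishing these estimates uniformly in $q$ is the technical heart of the theorem, and your sketch asserts it rather than supplies it. Second, the endgame: from ``$A$ occupies a positive proportion of sufficiently many conjugate tori'' a Ruzsa covering argument yields at best $A^{O_r(1)}=L$, and you cannot pass from that to $A^3=L$ ``after readjusting constants''; the reduction of a bounded power to the third power requires quasirandomness, i.e.\ the Gowers/Nikolov--Pyber argument based on the $q^{\Theta(1)}$ lower bound for the dimension of any nontrivial representation of $L$ --- incidentally the same representation-theoretic input that this paper invokes separately (via~\cite{J, Sch}) in the proof of Claim~\ref{claim_sl2}. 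As a roadmap your proposal is faithful to Helfgott's $SL_2$ strategy, but as a proof of the stated rank-$r$ theorem these two points are exactly where it would fail.
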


\begin{thm}\label{thm_egg}
    The group~$G = SL(2, \overline{\mbbF}_p)$ with equipment $\lambda = \{G_n\}$ admits scaling entropy growth gap. The function $\phi(n) = \log (q_n)$ is the desired lower bound. 
\end{thm}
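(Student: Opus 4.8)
The plan is to estimate the $\eps$‑entropy of the averaged metrics from below, directly, for an arbitrary free p.m.p.\ action $G\acts{\alpha}\lr{X,\mu}$. I would fix once and for all a generating admissible \emph{metric} $\rho\le1$ (for instance $\rho=\suml_{i\ge1}2^{-i}\rho_{\xi_i}$ for a sequence of two–set partitions generating the $\sigma$–algebra), write $d_n=G^{G_n}_{av}\rho$, and aim to show that $\mbbH_\eps\lr{X,\mu,d_n}\gtrsim\log q_n$ for one small $\eps>0$; since $\Phi_\rho\in\mathcal{H}(\alpha,\lambda)$ by Zatitskiy's invariance theorem and $\log q_n$ does not depend on $\eps$ and tends to infinity, this gives $\mathcal{H}(\alpha,\lambda)\succsim\phi$ with $\phi(n)=\log q_n$. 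The whole point of taking $\lambda=\{G_n\}$ rather than an arbitrary F\o lner sequence is that each $G_n$ is a \emph{group}: then $d_n$ is $G_n$–invariant, and after choosing a measurable section of the free $G_n$–action one may identify each orbit with $G_n$ so that $d_n$ restricts, on the orbit labelled by $z\in Z:=G_n\backslash X$, to the left–invariant semimetric on $G_n$ attached to the symmetric subadditive function $\phi_z(\gamma)=d_n(s(z),\gamma s(z))$.

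The first step is a lower bound, uniform in $n$, on how metrically large the $G_n$–orbits are. Set $f(g)=\intl_X\rho(gx,x)\,d\mu$; inserting the intermediate point $gx$ or $g_2x$ and using invariance of $\mu$ gives $f(g_1g_2)\le f(g_1)+f(g_2)$ and $f(g^{-1})=f(g)$. Fix a nontrivial $g_0\in G_1$ (so $g_0\in G_n$ for all $n$); freeness and $\rho$ being a metric (together with $\{\xi_i\}$ generating) make $c_1:=f(g_0)>0$, and this number does not depend on $n$. If $\frac1{\abs{G_n}}\suml_{g\in G_n}f(g)<\del_0:=\min\lr{\tfrac14,\tfrac{c_1^2}{4}}$, then the symmetric set $B=\{g\in G_n:f(g)<\sqrt{\del_0}\}$ has more than $\abs{G_n}/2$ elements, hence $BB=G_n$, and writing $g_0=b_1b_2$ with $b_i\in B$ yields $c_1\le f(b_1)+f(b_2)<2\sqrt{\del_0}\le c_1$, a contradiction; so $\frac1{\abs{G_n}}\suml_{g\in G_n}f(g)\ge\del_0$ for all $n$. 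Averaging $\rho$ over ordered pairs in a $G_n$–orbit and using invariance of $\mu$ rewrites this as $\intl_X\bigl(\tfrac1{\abs{G_n}^2}\suml_{g,h\in G_n}\rho(gx,hx)\bigr)\,d\mu\ge\del_0$; the inner quantity equals the $G_n$–average of $\gamma\mapsto d_n(x,\gamma x)$ and so never exceeds $\diam_{d_n}$ of the orbit through $x$, while $d_n\le1$, so the set $Z_n\subset Z$ of orbits of $d_n$–diameter at least $\del_0/2$ has measure at least $\del_0/2$.

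Now fix $\eps>0$ small in terms of $\del_0$ and the constant $\del$ of the Growth Theorem~\ref{thm_growth}, and take a decomposition $X=X_0\cup X_1\cup\dots\cup X_k$ with $\mu(X_0)<\eps$ and $\diam_{d_n}(X_i)<\eps$ realizing $\mbbH_\eps(X,\mu,d_n)=\log_2 k$. Since $\mu(X_0)<\eps$, on all but a set of orbits of measure $<\sqrt\eps$ the fibre of $X_0$ has at most $\sqrt\eps\abs{G_n}$ points; choose such an orbit $z_0$ that moreover lies in $Z_n$ (possible because $\del_0/2>\sqrt\eps$). On the fibre over $z_0$, identified with $G_n$, each $X_i$ with $i\ge1$ has $\phi_{z_0}$–diameter $<\eps$, hence lies in one left translate of $S:=\{\gamma\in G_n:\phi_{z_0}(\gamma)<\eps\}$, whence $k\ge(1-\sqrt\eps)\abs{G_n}/\abs{S}$. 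Here $S$ is symmetric and $\phi_{z_0}$ is subadditive, so $\phi_{z_0}(\gamma_1\cdots\gamma_m)<m\eps$ for $\gamma_i\in S$. Pick $\kappa\in(0,1/3)$ small enough that $(1-\kappa)(1+\del)>1$ and suppose, for contradiction, that $\abs{S}>\abs{G_n}^{1-\kappa}$. Every proper subgroup of $SL(2,\mbbF_{q_n})$ has order at most $q_n(q_n-1)<\abs{G_n}^{2/3}<\abs{G_n}^{1-\kappa}$, so $S$ generates $G_n$, hence its image $\bar S$ generates $PSL(2,\mbbF_{q_n})$; the Growth Theorem then rules out the alternative $\abs{\bar S^3}>c\abs{\bar S}^{1+\del}$ for $q_n$ large (it would exceed $\abs{PSL(2,\mbbF_{q_n})}$), so $\bar S^3=PSL(2,\mbbF_{q_n})$, and since $SL(2,\mbbF_{q_n})$ is perfect (no subgroup of index $2$) a short additional argument upgrades this to $S^m=G_n$ for an absolute constant $m$. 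But then $\phi_{z_0}<m\eps$ on all of $G_n$, so the orbit $z_0$ has $d_n$–diameter $<m\eps<\del_0/2$ once $\eps$ was chosen small — contradicting $z_0\in Z_n$. Hence $\abs{S}\le\abs{G_n}^{1-\kappa}$, so $k\ge(1-\sqrt\eps)\abs{G_n}^{\kappa}$ and $\mbbH_\eps(X,\mu,d_n)=\log_2 k\ge\kappa\log_2\abs{G_n}-O(1)\gtrsim\log q_n$ for all sufficiently large $n$; dropping finitely many $n$ does not affect the equivalence class, so $\mathcal{H}(\alpha,\lambda)\succsim\phi$. The independence of the growth gap from the choice of F\o lner sequence is then a separate, softer matter, following from the general comparison of scaling entropies along different suitable equipments.

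The step I expect to be the main obstacle is the uniform–in–$n$ orbit–spread bound of the second paragraph together with its role in the dichotomy of the third: a priori the explosion of $\abs{G_n}$ could let the $G_n$–orbits collapse in the metric $d_n$, and the only thing forbidding this is that every $G_n$ contains the fixed nontrivial element $g_0$ of $G_1$ — this is precisely where the presentation of $G$ as an increasing union of finite groups of Lie type is essential, and it is what makes the Growth Theorem bite, since a ``large'' $\eps$–stabilizer $S$ would be forced to generate $G_n$ with bounded diameter and thereby metrically trivialize an orbit of definite size. The remaining points I am suppressing are routine: the measurable section and the product structure $\mu=u_{G_n}\times\nu$ on $X\cong G_n\times Z$; the elementary fact that a symmetric subset of a finite group of cardinality exceeding half squares onto the whole group; the short argument recovering the central element $\pm I$ after passing to $PSL$; the harmless $O(\eps)$ rescalings hidden in the identifications; and the precise choices of $\kappa$ and of how small $\eps$ must be taken in terms of $\del_0$ and $\del$.
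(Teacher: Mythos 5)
Your argument is correct, and it follows the same overall skeleton as the paper — restrict the $G_n$--averaged metric to a single $G_n$--orbit, where it becomes a left--invariant semimetric of definite diameter, and then use the Helfgott/Pyber--Szab\'o Growth Theorem to force the covering number of that orbit to be at least $\abs{G_n}^{\kappa}\asymp q_n^{c}$ — but two of its sub-arguments are genuinely different. First, the uniform-in-$n$ orbit-spread bound: the paper fixes the unipotent $g_0\in G_1$ of order $p$ and, using freeness, builds a special partition $\xi$ into $p$ transversals of the $g_0$--orbits, so that the cut semimetric $\rho_\xi$ has mean $\ge \tfrac12$ on \emph{every} $G_n$--orbit; you instead take an arbitrary generating metric $\rho$ and get the bound from the subadditive symmetric function $f(g)=\int\rho(gx,x)\,d\mu$ plus the observation that a symmetric subset of density $>\tfrac12$ squares onto $G_n$, applied to the same fixed $g_0$ — a softer argument that yields only a positive-measure set of orbits of diameter $\ge\delta_0/2$, which indeed suffices. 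Second, the ``small set that does not grow'' branch: the paper lets the $\eps$--ball $B$ fail to generate and kills that case by representation theory (every nontrivial irreducible representation of $SL(2,\mbbF_q)$ has dimension $\ge\frac{q-1}{2}$, so the generated subgroup cannot have small index), whereas you kill it by the Dickson-type bound that every proper subgroup of $SL(2,\mbbF_{q_n})$ has order at most $q_n(q_n-1)<\abs{G_n}^{2/3}$, so a large $S$ must generate, after which the Growth Theorem forces $\bar S^3=PSL(2,\mbbF_{q_n})$ and the bounded-diameter contradiction; your inversion of the dichotomy (assume $\abs{S}$ large, contradict the orbit diameter) is logically equivalent to the paper's direct use of $\abs{B^3}\ge c\abs{B}^{1+\del}$. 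Your route is arguably cleaner in one respect — you apply the Growth Theorem to the genuinely simple group $PSL(2,\mbbF_{q_n})$ and then recover the center (your suppressed upgrade does work: $e\in S$, and an order-$4$ element $a$ with $a^2=-I$ lies in $S^3\cup(-S^3)$, so $-I\in S^6$ and $S^9=G_n$), whereas the paper applies the statement for simple groups directly to $SL(2,\mbbF_q)$ — but it trades the paper's Jordan--Schur input for Dickson's subgroup classification, which you should cite and which, as stated, needs $q_n$ large (for small $q$ the exceptional subgroups such as $2.A_5$ exceed $q(q-1)$); since you discard finitely many $n$ anyway, this is harmless. The remaining suppressed points (measurable section, $\mu\cong u_{G_n}\times\nu$, choice of $\kappa$ with $(1-\kappa)(1+\del)>1$ and of $\eps<\delta_0/(2m)$) are indeed routine.
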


\begin{proof}
Consider a free $p.m.p.$ action $G \acts{} (X, \mu)$. Take some non-trivial element~$g_0$ from $G_1 = SL(2, {\mbbF_{p}})$, let us take $g_0 = \big(\begin{smallmatrix}
  1 & 1\\
  0 & 1
\end{smallmatrix}\big)$, for instance. Since $g_0$~has order~$p$ and the action is free, there exists a measurable partition~$\xi$ of~$(X,\mu)$ into $p$~cells such that $\xi(x) \not = \xi((g_0)^i x)$ for every $i = 1, \ldots, p-1$. That~is, each cell of $\xi$ contains exactly one point from each $g_0$--orbit. 
Let~$\rho_\xi$ be the cut semimetric corresponding to~$\xi$.

Suppose that $\mbbH_{\eps^2}(X,\mu, G^n_{av}\rho_\xi) < \log k$ and let $X_0, X_1, \ldots, X_k$ be the corresponding decomposition. Since~$G_n$ is finite, the measure space decomposes as $(G_n, \nu) \times (Y, \eta)$, where the action of~$G_n$ preserves the second component. Since the exceptional set $X_0$ has measure less than $\eps^2$, the $\eta$-measure of those~$y$-s that satisfy $|G_n \times \{y\} \cap X_0| > \eps|G_n|$ is less than~$\eps$. The restriction of~$G^n_{av}\rho_\xi$ to each $G_n$--orbit is $G_n$--invariant and can be obtained by averaging the restriction of~$\rho_\xi$. The~restriction of~$\rho_\xi$ to a  $G_n$--orbit corresponds to its partition into $p$~parts of equal size. Hence, the restriction of~$\rho_\xi$ has mean value at least $\frac{1}{2}$ as well as its average since averaging preserves $L^1$--norm. 
All of the above implies that there exits at least one $G_n$--orbit with an invariant metric that has $\eps$--entropy (with respect to uniform measure) less than $\log k$ and $L^1$--norm of at least~$\frac{1}{2}$. It suffices to prove the following.
\begin{claim}\label{claim_sl2}
Let $\rho$ be a left--invariant semimetric on $SL(2, {\mbbF_{q}})$ with diameter greater than~$3\eps$, where $\eps \in (1,\frac{1}{2})$. Then $\mbbH_\eps(SL(2, {\mbbF_{q}}), \nu, \rho) \ge c\log q $, where $\nu$~is the uniform probability measure and $c$ is an absolute constant.
\end{claim}
Indeed, we can identify the orbit that we found above with the group $SL(2, {\mbbF_{q_n}})$ with the left--invariant semimetric which has diameter at least $\frac{1}{2}$.  Applying Claim~\ref{claim_sl2}, we obtain $\log k \ge c\log q_n$ and complete the proof.

In order to prove Claim~\ref{claim_sl2}, we can assume that $q$ is sufficiently large depending only on~$\delta$, which is an absolute constant since the rank~$r = 2$. Also, assume that $\mbbH_\eps(SL(2, {\mbbF_{q}}), \nu, \rho) < c\log q $. Then at most~$q^c$ balls of radius~$\eps$ cover the entire group except a part of size $\eps|SL(2, \mbbF_{q})|$. Since the semimetric~$\rho$ is left--invariant, all balls with the same radius have the same size. Therefore, the size of each ball is at least $\frac{1}{2 q^c} |SL(2, \mbbF_{q})|$. Let~$B = B(\eps)$ be the ball of radius $\eps$ with center at identity. Since the diameter of the group is greater than~$3\eps$, the product $B(\eps)\cdot B(\eps) \cdot B(\eps) \subset B(3\eps)$ does not cover the whole group. Therefore, due to the Growth Theorem~\ref{thm_growth},  we have two options. Either $|BBB| \ge |B|^{1 + \del}$, or the ball~$B$ does not generate $SL(2, {\mbbF_{q}})$. 
In the first case, we have 
\begin{equation}
|SL(2, \mbbF_{q})| \ge |BBB| \ge \frac{1}{2^{1+\del} q^{c(1+\del)}} |SL(2, \mbbF_{q})|^{1 + \del}.
\end{equation}
Hence, 
\begin{equation}
q^{c(1 + \del)} \ge \frac{1}{2^{1 + \del}} |SL(2, \mbbF_{q})|^\del \ge \frac{1}{2^{1 + \del}} q^{\del}
\end{equation}
and, therefore, $c > \frac{\del}{2 + 2\del}$ provided~$q$ is sufficiently large. 

In the last case, the subgroup~$H$ generated by~$B$ contains at least $\frac{1}{2 q^c} |SL(2, \mbbF_{q})|$ elements and, hence, has index smaller than~$2q^c$. Note that all non-trivial irreducible representations of $SL(2, {\mbbF_{q}})$ over~$\mbbC$ have dimension of at least~$\frac{q-1}{2}$, see~\cite{J, Sch}. However, the unitary representation corresponding to the permutation action of $SL(2, {\mbbF_{q}})$ on $SL(2, {\mbbF_{q}}) / H$ has dimension less than~$2q^c$ implying that~$c > \frac{1}{2}$. 

In both cases, we have $c > \frac{\del}{2 + 2\del}$, therefore, $\mbbH_\eps(SL(2, {\mbbF_{q}}), \nu, \rho) \ge \frac{\del}{2 + 2\del} \log q$, and the claim is proved.  
\end{proof}

Notably, the logarithmic bound from Theorem~\ref{thm_egg} \emph{is sharp}. For any group~$G$ that can be presented as an increasing union of finite groups $G_n$, one can define the following p.m.p. action. Let $C_n = \{g_n^j\}_{j = 1}^{k_n}$ be the  set of right coset representatives of $G_{n-1} \backslash G_n$ endowed with uniform measure $\mu_n$. Each finite product space $\prod_{i=1}^n (C_i, \mu_i)$ can be identified with the group~$G_n$ with the uniform measure and, therefore, carries a p.m.p. action of~$G_n$. Since these actions of $G_n$-s agree, we obtain a p.m.p. action of~$G$ on the whole product space $(X, \mu) = \prod_{i=1}^\infty (C_i, \mu_i)$, where each subgroup~$G_n$ preserves all the components starting from~$n+1$. 

Take $\rho = \sum_i 2^{-i} \rho_i$, where each~$\rho_i$ is the cut semimetric distinguishing first $i$~components. Clearly, $\rho$~is an admissible metric, and for any $n > r$ the average $G_{av}^n \sum_{i < r} 2^{-i} \rho_i$ does not depend on coordinates starting from~$n+1$. Therefore, there exists a partition into $|G_n|$ cells, each of which has diameter zero with respect to $G_{av}^n \sum_{i < r} 2^{-i} \rho_i$. Hence, for any positive~$\eps$, the $\eps$--entropy of~$G_{av}^n \rho$ is bounded from above by~$\log|G_n|$ for sufficiently large~$n$. For the case of $G = SL(2, \overline{\mbbF}_p)$, we have $q< SL(2, {\mbbF_q}) < q^4$. Hence, $\log|G_n| \asymp \log q_n$, and the bound is sharp. 

Also, looking through the proof of Theorem~\ref{thm_egg}, one may see \emph{a stronger alternative}. For every (not  necessarily free) p.m.p. action of $SL(2, \overline{\mbbF}_p)$, its scaling entropy is either bounded  or grows at least as fast as $\phi(n) = \log (q_n)$. 

Let us also mention that the scaling entropy growth gap property does not depend on which F\o lner sequence we choose.
\begin{prop}
    The property of having scaling entropy growth gap does not depend on the choice of F\o lner equipment.
\end{prop}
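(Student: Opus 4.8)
The plan is to show that a scaling entropy growth gap with respect to one F\o lner sequence $\lambda=\{F_n\}$ forces one with respect to any other F\o lner sequence $\lambda'=\{F'_m\}$; by symmetry it suffices to prove one implication. I fix throughout the single generating admissible metric $\rho=\sum_{i\ge1}2^{-i}\rho_{\xi_i}$ (so that $0\le\rho\le1$) used in the proof of Theorem~\ref{thm_lowbound}, and compute all scaling entropies with this $\rho$. Unwinding the definition of $\mathcal{H}(\alpha,\lambda)\succsim\phi$, the hypothesis reads: there is $\phi$ with $\phi(n)\to\infty$, which after replacing $\phi(n)$ by $\inf_{k\ge n}\phi(k)$ we may take nondecreasing, such that for every free p.m.p. action $\alpha$ there exist $\delta_\alpha>0$ and $C_\alpha<\infty$ with $\mbbH_{\delta_\alpha}(X,\mu,G^{F_n}_{av}\rho)\ge\frac{1}{C_\alpha}\phi(n)$ for all $n$. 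I aim to produce a single nondecreasing $\phi'\to\infty$ for which the analogous bound holds along $\lambda'$ for every free action.

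The key idea is to bound $G^{F'_m}_{av}\rho$ from below, for each $m$, by a fixed multiple of a convex combination of translates of semimetrics $G^{F_n}_{av}\rho$ with indices $n$ tending to infinity with $m$. For this I would invoke the Ornstein--Weiss quasi-tiling theorem: for each $\eps>0$ there are $N(\eps)$ and $\eta(\eps)>0$ such that any set that is $(S_{N(\eps)},\eta(\eps))$-invariant over a suitably invariant tower of shapes $S_1,\dots,S_{N(\eps)}$ can be covered, up to an $\eps$-fraction and with $\eps$-overlaps, by right translates of the $S_i$. I apply this with $A=F'_m$ and with the shapes chosen to be $F_{a_1(m)},\dots,F_{a_{N(\eps_m)}(m)}$, where $\eps_m\to0$ is a sequence fixed once and for all, \emph{independently of any action}. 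Because $\{F_n\}$ is F\o lner, some subsequence of it forms a tower of any prescribed invariance; because $\{F'_m\}$ is F\o lner, each $F'_m$ is as invariant as we wish. The combinatorial core of the proof — and the step I expect to be the main obstacle — is to arrange, by a diagonal procedure that interleaves the invariance rates of the two sequences, a \emph{sliding window} of indices $a_1(m)<\dots<a_{N(\eps_m)}(m)$ so that simultaneously the tower condition holds, $F'_m$ is $(F_{a_{N(\eps_m)}(m)},\eta(\eps_m))$-invariant, and $a_1(m)\to\infty$. These are precisely the constraints the F\o lner property supplies, so the construction goes through, but the bookkeeping of $\eps$'s needs care.

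Granting the window, fix $m$, quasi-tile $F'_m$ by translates $F_{a(\ell)}c_\ell$ of the selected shapes, and use $0\le\rho\le1$ to absorb the overlaps and the uncovered part of $F'_m$. One obtains a pointwise estimate
\begin{equation}
G^{F'_m}_{av}\rho\ \ge\ \tfrac12\sum_\ell\nu_\ell\, c_\ell^{-1}\!\big(G^{F_{a(\ell)}}_{av}\rho\big)\ -\ \tau_m,
\end{equation}
where $\nu_\ell\ge0$, $\sum_\ell\nu_\ell=1$, every $a(\ell)\ge a_1(m)$, all the semimetrics involved are bounded by $1$, and $\tau_m\to0$. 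Since $\mbbH_\eps$ is monotone in the semimetric, homogeneous in $\eps$, and invariant under the measure-preserving translations $c_\ell$, the factor $\tfrac12$ and the error $\tau_m$ cost only a bounded worsening of the threshold; applying the convex-combination form of Lemma~\ref{lm_lowerbound} then gives, for every $\eps>0$ and all sufficiently large $m$, an index $\ell_0$ with $\mbbH_{\theta(\eps)}(X,\mu,G^{F_{a(\ell_0)}}_{av}\rho)\le\mbbH_\eps(X,\mu,G^{F'_m}_{av}\rho)$ for some function $\theta(\eps)\to0$ as $\eps\to0$. Now let $\beta$ be an arbitrary free p.m.p. action and pick $\eps_0$ with $\theta(\eps_0)\le\delta_\beta$. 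For all large $m$, combining the last inequality with the growth gap for $\lambda$ and the monotonicity of $\mbbH$ in its subscript,
\begin{equation}
\mbbH_{\eps_0}(X,\mu,G^{F'_m}_{av}\rho)\ \ge\ \mbbH_{\delta_\beta}(X,\mu,G^{F_{a(\ell_0)}}_{av}\rho)\ \ge\ \frac{1}{C_\beta}\,\phi\big(a(\ell_0)\big)\ \ge\ \frac{1}{C_\beta}\,\phi\big(a_1(m)\big),
\end{equation}
using that $\phi$ is nondecreasing. Put $\phi'(m):=\phi(a_1(m))$: it depends only on $\lambda$, $\lambda'$ and the fixed sequence $\{\eps_m\}$ — not on $\beta$ — and $\phi'(m)\to\infty$ since $a_1(m)\to\infty$. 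Absorbing the finitely many exceptional $m$ into the constant, we get $\mbbH_{\eps_0}(X,\mu,G^{F'_m}_{av}\rho)\ge\frac1{C_\beta}\phi'(m)$ for all $m$, i.e. $\mathcal{H}(\beta,\lambda')\succsim\phi'$. As $\phi'$ is the same for every free $\beta$, the group $G$ has a scaling entropy growth gap with respect to $\lambda'$, and exchanging $\lambda$ with $\lambda'$ gives the converse.
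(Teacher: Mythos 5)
Your argument is correct in outline, but it takes a genuinely different and substantially heavier route than the paper. You quasi-tile each new F\o lner set $F'_m$ by right translates of old F\o lner sets $F_{a(\ell)}$ via the Ornstein--Weiss theorem (with a diagonal ``sliding window'' to force $a_1(m)\to\infty$), obtain the pointwise bound $G^{F'_m}_{av}\rho\ge\frac12\sum_\ell\nu_\ell\,c_\ell^{-1}(G^{F_{a(\ell)}}_{av}\rho)-\tau_m$, and then apply the convex-combination lemma (Lemma~\ref{lm_lowerbound}) together with translation invariance of $\mbbH_\eps$; your handling of the action-dependence is right, since the transferred bound $\phi'(m)=\phi(a_1(m))$ depends only on the two equipments and the tiling data, while the thresholds $\eps_0,\delta_\beta$ and the finitely many exceptional $m$ may depend on $\beta$ without harm. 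The paper reaches the same conclusion without any tiling: for a fixed $n$ it averages the translates $g^{-1}\lr{G^{F_n}_{av}\rho}$ over all $g\in W_r$ and observes that this convex combination is dominated pointwise by $G^{W_r}_{av}\rho$ plus an error at most $2^{-n}$, using only the single almost-invariance estimate $|F_nW_r\triangle W_r|<2^{-n}|W_r|$; then the same convex-combination lemma yields $\mbbH_\eps(G^{W_r}_{av}\rho)\ge\mbbH_{4\sqrt\eps}(G^{F_n}_{av}\rho)\succsim\phi(n)$, and the new bound is $\psi(r)=\phi(n(r))$. So both proofs share the same key lemma and the same uniformity bookkeeping, but the paper's one-line domination replaces your entire Ornstein--Weiss layer: you avoid nothing by tiling, and the parts you leave sketched (the tower/window construction, shapes containing the identity --- which forces a harmless extra translation of $F_n$ --- and the overlap estimates behind $\tau_m$) are exactly the technical debt the paper's averaging trick never incurs. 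Your route does go through, and it would adapt to situations where one only controls coverings of $F'_m$ by translates of the $F_n$'s, but for this statement the direct comparison along $F_nW_r\approx W_r$ is both shorter and sharper.
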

\begin{proof}
Assume that a group~$G$ has a scaling entropy growth gap with respect to a F\o lner sequence~$\{F_n\}$. Let $\phi(n)$ be a corresponding bound and~$\{W_n\}$ another F\o lner sequence in~$G$. 

For any integer~$n$, there exits some~$k_n$ such that for any $r > k_n$ the inequality $|F_n W_{r} \triangle W_{r}| < 2^{-n} | W_{r}|$ is satisfied.
Let $(X, \mu, G)$ be a free p.m.p. action of~$G$ and~$\rho$ a measurable metric bounded from above by one almost everywhere. Then
\begin{equation}
\frac{1}{|W_{r}|}\suml_{g \in W_{r}} g^{-1} \frac{1}{|F_n|}\suml_{h \in F_n} h^{-1} \rho  
% = \frac{1}{|W_r|} \suml_{f \in F_n W_r} \frac{\#\{h, g \colon hg = f\}}{|F_n|} f^{-1} \rho 
\le \frac{1}{|W_r|} \suml_{f \in F_n W_r} f^{-1} \rho = G_{av}^{W_r} \rho + l_1,
\end{equation}
where the term~$l_1$ is bounded in absolute value by~$2^{-n}$. The last equality holds true due to the $F_n$-almost invariance of~$W_{r}$. 
Take~$\eps > 0$ satisfying 
$\mbbH_{4\sqrt{\eps}}(G_{av}^{F_n}\rho) \succsim \phi(n)$. For sufficiently large~$n$, the term~$l_1$ is negligible while computing $\eps$--entropy of~$G_{av}^{W_{r}} \rho$. Lemma~\ref{lm_lowerbound} gives
\begin{equation}
\mbbH_\eps(G_{av}^{W_{r}}\rho) \ge 
\mbbH_{4\eps}(G_{av}^{W_r} \rho + l_1) \ge
\mbbH_{4\sqrt{\eps}}(G_{av}^{F_n}\rho) \succsim \phi(n).
\end{equation}
Therefore, $G$~has a scaling entropy growth gap with respect to~$\{W_r\}$ and bound function $\psi(r) = \phi(n(r))$, where~$n(r)$ is the maximal~$n$ such that $k_n < r$.
\end{proof}

The fact that every compact representation decomposes into a direct  sum of finite-dimensional representations implies the absence of a free compact action of the infinite symmetric group~$S_\infty$. Indeed, the only finite-dimensional irreducible representations of~$S_\infty$ are the trivial and sign representations, which do not distinguish permutations with the same sign. 
This observation suggests the conjecture that $S_\infty$~should have a scaling entropy growth gap. It is unknown to the author whether this conjecture is true or not. 

\bibliographystyle{amsplain}

\end{document}